\theoremstyle{definition}
\newtheorem{definition}{Definition}[section]
\newtheorem{theorem}{Theorem}[section]
\newtheorem{proposition}{Proposition}[section]
\newtheorem{lemma}{Lemma}[section]
\newcommand{\Addresses}{{
		\bigskip
		\footnotesize
		\textsc{Department of Mathematics, University of Toronto, 40 St. George St., Toronto, ON, Canada}\par\nopagebreak
		\textit{E-mail address: } \texttt{petr.kosenko@mail.utoronto.ca}
}}
\title{Fundamental inequality for hyperbolic Coxeter and Fuchsian groups equipped with geometric distances}
\author{Petr Kosenko}
\begin{document}
	\maketitle
	\begin{abstract}
		We prove that the hitting measure is not equivalent to the Lebesgue measure for a large class of nearest-neighbour random walks on hyperbolic reflection groups and Fuchsian groups.
	\end{abstract}
	\section{Introduction}
	Fix a hyperbolic regular polygon $\Delta_{n,m} \subset \mathbb{H}^2$ with $n$ sides and interior angles equal to $\frac{2\pi}{m}$. If $m \ge 4$ is even, then the group generated by reflections $r_i$ with respect to the sides of $\Delta_{n,m}$ is called the \textbf{hyperbolic Coxeter group}
	\[
	\Gamma_{n,m} := \left< r_1, \dots, r_n \ | \ r_i^2 = (r_i r_{i+1})^{m/2} = e \right>.
	\]
	Therefore, $\Gamma_{n,m}$ is equipped with a natural geometric (that is, isometric, cocompact, and properly discontinuous) action on $\mathbb{H}^2$ (see \cite[Theorem 6.4.3]{book:62172}), which makes it a word-hyperbolic group.
	
	If a hyperbolic group $\Gamma$ is equipped with a geometric action on $\mathbb{H}^2$, we can fix a point $x_0 \in \mathbb{H}^2$ with $\text{Stab}_{x_0} = \{e\}$ and define
	\[
	d_{\mathbb{H}^2}(g, h) := d_{\mathbb{H}^2}(g.x_0, h.x_0).
	\]
	Due to the Milnor-\v{S}varc lemma, the distance $d_{\mathbb{H}^2}$ is a well-defined distance which is quasi-isometric to the word distance. Let us call such distances \textbf{geometric}.
	
	Let $(G, d)$ be a finitely generated metric group with a left-invariant distance $d$. Consider a nearest-neighbour random walk $(X_i)$ defined by a probability measure $\mu$ with support in the generating set of $G$. Then we can define the following invariants: \textbf{Avez entropy} $h$, \textbf{drift} $l$ and \textbf{logarithmic volume} $v$:
	\[
	\begin{aligned}
	v_d &:= \lim_{n \rightarrow \infty} \dfrac{\log|B_n|}{n} &\quad  &\text{(logarithmic volume)} \\
	h_{\mu} &:= \lim_{n \rightarrow \infty} \dfrac{-\mathbb{E} [\log \mu_n]}{n} &\quad &\text{(Avez entropy)} \\
	l_{d, \mu} &:= \lim_{n \rightarrow \infty} \dfrac{\mathbb{E} [d(e, X_n)]}{n} &\quad &\text{(drift)}, \\
	\end{aligned}
	\]
	where $B_n = \{ g \in G : d(e, g) \le n \}$. 
	
	If these invariants are well-defined, they alone can provide a lot of information about a random walk on a group. In particular, $h = 0$ if and only if the Poisson boundary of the random walk is trivial (see \cite{kaimanovich1983random}, \cite{kaimanovich2000poisson}). Moreover, they are related via the \textbf{fundamental inequality} (for proofs see \cite{guivarc1980loi}, \cite{vershik2000}, \cite{blachere2008asymptotic}):
	\begin{equation}
	\label{introduct fundamental inequality}
	h_{\mu} \le l_{d, \mu} v_d.
	\end{equation}
	There is a well-known problem, which was considered by Y. Guivarc'h, V. Kaimanovich, S. Lalley, A. Vershik, S. Gou\"ezel, and many others (see \cite{guivarc1980loi}, \cite{zbMATH04201213}, \cite{gouezel2014local}, \cite{vershik2000}, \cite{leprince2008}, \cite{kaimanovich2011matrix}, \cite{blachere2011harmonic}, \cite{gouezel2018entropy} for example):
	
	\textbf{Question 1:} how can one classify metric groups and random walks on them for which we have $$h_{\mu} < l_{d, \mu} v_d?$$
	
	In this paper we prove the following theorems:
	\begin{theorem}
		\label{my theorem}
		Let us endow $\Gamma_{n,m}$ with the geometric distance $d = d_{\mathbb{H}^2}$. Then for all but finitely many pairs $(n,m)$ with $n > 3$ and even $m > 3$ we have 
		\[
		h_{\mu} < l_{d, \mu} v_d
		\]
		for the simple random walk on $\Gamma_{n,m}$, i.e., when $\mu$ is the uniform measure on the set of reflections through the sides of $\Delta_{n,m}$.
	\end{theorem}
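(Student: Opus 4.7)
The plan is to deduce the strict inequality from the singularity of the hitting measure on the boundary, which is precisely the statement announced in the abstract. For a non-elementary hyperbolic group acting geometrically on $\mathbb{H}^2$, the theorem of Blach\`ere--Ha\"issinsky--Mathieu asserts that equality in \eqref{introduct fundamental inequality} holds if and only if the hitting measure $\nu$ on $\partial \mathbb{H}^2 \cong S^1$ is equivalent to the Patterson--Sullivan measure attached to the action; since the $\Gamma_{n,m}$-action is cocompact, the latter is equivalent to Lebesgue measure on $S^1$. Hence Theorem \ref{my theorem} reduces to the following claim: for all but finitely many admissible $(n,m)$, the hitting measure $\nu$ of the simple random walk is mutually singular with Lebesgue measure on $S^1$.

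\textbf{Discretization and harmonic Markov chain.} The $n$ sides of $\Delta_{n,m}$, extended to complete geodesics, partition $S^1$ into $n$ arcs $A_1,\dots,A_n$, and iterating under $\Gamma_{n,m}$ produces for each reduced word $w = r_{i_1}\cdots r_{i_k}$ a nested arc $A_w \subset S^1$ shadowing the tile $w \cdot \Delta_{n,m}$. Two complementary estimates are then needed. Geometrically, the Shadow Lemma combined with the explicit hyperbolic trigonometry of $\Delta_{n,m}$ yields $|A_w| \asymp e^{-d_{\mathbb{H}^2}(x_0,\,w\cdot x_0)}$ up to constants depending only on $(n,m)$. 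Harmonically, the nearest-neighbour structure of the walk and the strong Markov property let one express $\nu(A_w)$ as a product of transition probabilities of a finite-state Markov chain on side-types; by the full dihedral symmetry shared by $\mu$ and $\Delta_{n,m}$, this chain degenerates to one or two scalar parameters $p = p(n,m)$, essentially the probabilities that the walk, having crossed a side, later exits a fixed half-plane bounded by an adjacent side.

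\textbf{Comparison.} Equivalence of $\nu$ and Lebesgue would force their logarithmic growth rates along $\nu$-generic descending chains $A_{w_1} \supset A_{w_2} \supset \cdots$ to coincide; applying the Shannon--McMillan--Breiman theorem on the harmonic side and Birkhoff's ergodic theorem on the geometric side converts this requirement into a rigid identity between $\log p(n,m)$ and the hyperbolic edge-length and angle data of $\Delta_{n,m}$. I would refute this identity by asymptotic expansion, separately in the regimes $n \to \infty$ with $m$ fixed and $m \to \infty$ with $n$ fixed, showing that the harmonic growth rate and the geometric growth rate disagree at leading order. This excludes all but a bounded region of pairs $(n,m)$, and the finitely many remaining cases are settled by a direct algebraic or numerical check. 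A Vitali-type covering argument on the system of shadow arcs then upgrades the asymptotic mismatch into full mutual singularity.

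\textbf{Main obstacle.} The principal difficulty is that $p(n,m)$ is the solution of an implicit fixed-point equation coming from traffic equations for the Green function, with no closed form. The core analytic challenge is therefore to extract asymptotics of $p(n,m)$ sharp enough that $\log p(n,m)$ can be compared against the explicit hyperbolic trigonometry of $\Delta_{n,m}$, while keeping constants effective enough to certify the leading-order mismatch for all but finitely many $(n,m)$ rather than merely for almost every pair.
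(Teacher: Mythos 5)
Your reduction is fine as far as it goes: invoking Theorem \ref{main tool} ((1) $\Leftrightarrow$ (3)) to replace the strict inequality $h_\mu < l_{d,\mu} v_d$ by singularity of the hitting measure is legitimate. But from that point on the proposal is a programme, not a proof, and the hole sits exactly where the theorem lives. The entire argument hinges on establishing a quantitative mismatch between the harmonic growth rate of $\nu(A_w)$ and the geometric growth rate of $|A_w|$, and this step is never carried out: you assert that the side-type Markov chain ``degenerates to one or two scalar parameters $p(n,m)$'' (unjustified --- the relevant half-plane exit probabilities of a nearest-neighbour walk on $\Gamma_{n,m}$ satisfy implicit systems with no closed form, as you yourself note), and you then defer the asymptotic expansion of $\log p(n,m)$ and its comparison with the hyperbolic trigonometric data to future work, explicitly labelling it the ``core analytic challenge.'' Certifying that mismatch with effective constants, uniformly enough to exclude all but finitely many $(n,m)$, is precisely the content of the theorem; without it the SMB/Birkhoff comparison and the Vitali covering upgrade have nothing to act on. So the proposal, as written, does not prove the statement.

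It is worth seeing how the paper sidesteps every estimate you flag as hard. It never touches hitting probabilities, shadows, or dimensions; instead it uses criterion (4) of Theorem \ref{main tool}: if $h_\mu = l_{d,\mu} v_d$ then $|v_d\, d(e,g) - d_\mu(e,g)|$ is uniformly bounded. Since $v_d = 1$ (Lemma \ref{logarithmic volume of gammas}), it suffices to exhibit one hyperbolic element along which the hyperbolic distance outruns the Green metric linearly. For the Green metric the trivial single-path bound \eqref{stronger} gives $d_\mu(e,g^k) \le 2k\log n$ for $g = r_1 r_{\frac{n}{2}+1}$ (even $n$; the odd case uses $r_1 r_{\frac{n+1}{2}}$), while the translation length is computed exactly by hyperbolic trigonometry, $L = 4\,\mathrm{arccosh}\bigl(\cos(\pi/m)/\sin(\pi/n)\bigr)$ in the even case and via a Lambert quadrilateral in the odd case. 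Elementary monotonicity arguments (Propositions \ref{even case} and \ref{odd case}) show $L > 2\log n$ for all but finitely many pairs, whence $d_{\mathbb{H}^2}(e,g^k) - d_\mu(e,g^k) \to \infty$, contradicting (4). The lesson is that a one-sided, single-geodesic comparison already breaks the rigidity in (4), so no fixed-point equations or measure-theoretic comparisons on the boundary are needed.
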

	Morevoer, in Section \ref{Even case} we show that Theorem \ref{my theorem} holds for even $n \ge 4$ and for all \textbf{geometrically symmetric} nearest-neighbour random walks, that is, such that $\mu(r_i) = \mu(r_i+\frac{n}{2}) > 0$ for every $1 \le i \le \frac{n}{2}$.
	\begin{theorem}
	    \label{my Fuchsian theorem}
	    Let $n \ge 4$ be even, and let $m \ge 3$. Consider a Fuchsian group $\mathcal{F}_{n,m}$ generated by side-pairing translations $(t_i)_{1\le i \le n}$ associated to the polygon $\Delta_{n,m}$, identifying the opposite sides of the polygon. Then for all but finitely many pairs $(n,m)$ we have
	    \[
	    h_{\mu} < l_{d, \mu} v_d
	    \]
	    for any generating \textbf{symmetric} nearest-neighbour random walk on $\mathcal{F}_{n,m}$, i.e. the support of $\mu$ is the generating set $\{ t_i \}_{1 \le i \le n}$, and $\mu(t_i) = \mu(t^{-1}_i) > 0$ for all $1 \le i \le n$. 
	\end{theorem}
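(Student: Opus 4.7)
The plan is to deduce the strict inequality $h_\mu < l_{d,\mu}v_d$ from a non-equivalence statement about the hitting measure on the boundary. The key input is the theorem of Blach\`ere, Ha\"issinsky and Mathieu \cite{blachere2011harmonic}: for a symmetric finitely supported admissible random walk on a word-hyperbolic group acting geometrically on $\mathbb{H}^2$, equality $h_\mu = l_{d,\mu}v_d$ holds if and only if the hitting measure $\nu$ on $\partial\mathbb{H}^2 \cong S^1$ lies in the Patterson--Sullivan measure class, which in the cocompact setting coincides with the Lebesgue class. It therefore suffices to show that $\nu$ is not equivalent (with bounded densities) to the Lebesgue measure $\lambda$ for all but finitely many pairs $(n,m)$.

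To produce the obstruction I would adapt the method developed for the Coxeter-group case of Theorem \ref{my theorem}. Let $I_1,\dots,I_n \subset \partial\mathbb{H}^2$ denote the shadows from the base-point $x_0$ of the sides $s_1,\dots,s_n$ of the fundamental polygon $\Delta_{n,m}$. The polygon carries a natural rotational symmetry of order $n$; when $\mu$ is invariant under this rotation, the values $\nu(I_i)$ reproduce the symmetry $\lambda(I_i)=1/n$ and singularity is invisible at this coarse scale. I would therefore pass to a finer level: inside each $I_i$, consider the sub-arcs $J_{i,k}$ cut out by the sides of the neighbouring polygon $t_i(\Delta_{n,m})$ that are still visible from $x_0$. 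Conditioned on the walk first exiting $\Delta_{n,m}$ through $s_i$, its subsequent evolution is a finite-state Markov chain whose transition probabilities $p_{ij}$ encode the first passage of the walk from one side of the tessellation to an adjacent one. The opposite-side identification $t_i \leftrightarrow t_{i+n/2}^{-1}$ provided by the evenness of $n$ reduces the number of independent unknowns in this system.

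Solving (or carefully estimating) the Markov chain yields expressions for $\nu(J_{i,k})$, while the Lebesgue measures $\lambda(J_{i,k})$ are explicit trigonometric functions of $(n,m)$ determined by the hyperbolic geometry of $\Delta_{n,m}$. If $\nu$ were equivalent to $\lambda$ with bounded density, then the ratios $\nu(J_{i,k})/\lambda(J_{i,k})$ would have to remain uniformly bounded away from $0$ and $\infty$ as the arcs $J_{i,k}$ are refined by iterating the construction to further generations. The theorem thus reduces to proving that this boundedness fails for all pairs $(n,m)$ outside a finite exceptional set.

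The principal difficulty is the asymptotic analysis of the first-passage Markov chain: the transition probabilities $p_{ij}$ depend sensitively on the inradius and the angular widths of $\Delta_{n,m}$, and the estimates must be simultaneously uniform in $(n,m)$ and sharp enough to preclude accidental equality between $\nu$-ratios and $\lambda$-ratios. The hypothesis that $n$ is even is crucial here, since the involution $t_i \leftrightarrow t_i^{-1}$ confers a block structure that makes explicit computation or estimation tractable; without it the Markov chain is substantially harder to analyze. The finitely many exceptional pairs $(n,m)$ are intrinsic to an argument of this asymptotic flavour, and isolating them is the final technical step.
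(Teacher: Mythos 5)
Your proposal is a research programme rather than a proof: every step that would actually carry the weight is deferred. The central claim --- that the hitting measure $\nu$ fails to be equivalent to Lebesgue measure for all but finitely many $(n,m)$ --- is exactly what needs to be established, and you reduce it to ``solving (or carefully estimating)'' a first-passage Markov chain whose transition probabilities you never compute, with uniformity in $(n,m)$ asserted as a ``principal difficulty'' and the identification of the exceptional pairs left as ``the final technical step.'' There are also structural problems with the setup itself: a nearest-neighbour walk on $\mathcal{F}_{n,m}$ does not induce a finite-state Markov chain on the sides of the tessellation in any obvious way (the exit distribution through a side depends on the whole group, not just on adjacency of tiles), and your coarse-scale symmetry argument assumes rotational invariance of $\mu$, which the theorem does not hypothesize --- only the symmetry $\mu(t_i)=\mu(t_i^{-1})$ is given. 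Finally, you aim at refuting condition (3) of Theorem \ref{main tool} (equivalence of $\nu$ with Lebesgue), which is the hardest of the equivalent conditions to attack directly; nothing in your text explains how the Markov-chain estimates would actually produce unbounded density ratios.

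The paper's proof is far simpler and goes through condition (4) of Theorem \ref{main tool}, exactly as in the Coxeter case (which, contrary to your description, is not a shadow/Markov-chain argument either). Since the support $\{t_i\}_{1\le i\le n}$ has $n$ elements, some generator satisfies $\mu(t_i)\ge \tfrac1n$, hence $F_\mu(e,t_i^k)\ge \mu(t_i)^k$ and $d_\mu(e,t_i^k)\le k\log n$. On the other hand $t_i$ is a hyperbolic translation whose axis passes through the centers of $\Delta_{n,m}$ and the adjacent tile, so along its axis $d_{\mathbb{H}^2}(e,t_i^k)=2k\,h_{n,m}$ with $h_{n,m}=\mathrm{arccosh}\bigl(\cos(\pi/m)/\sin(\pi/n)\bigr)$. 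By Proposition \ref{even case} (inequality \eqref{arccos}), $2h_{n,m}>\log n$ for all but finitely many even $n\ge4$, $m\ge3$, so $\bigl|v_d\,d_{\mathbb{H}^2}(e,t_i^k)-d_\mu(e,t_i^k)\bigr|\to\infty$ (using $v_d=1$), condition (4) fails, and Theorem \ref{main tool} gives $h_\mu<l_{d,\mu}v_d$. To repair your write-up you would either have to supply the entire quantitative analysis of your Markov chain, or switch to this direct comparison of the Green metric with the hyperbolic metric along powers of a single well-chosen generator.
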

	
	\noindent
	\textbf{Remark.} The exceptions for Theorem \ref{my theorem} are the pairs $(n,m) = (4,6),(5,4),(6,4)$, and for Theorem \ref{my Fuchsian theorem} the exceptions are  $(n,m)=(4,5),(4,6),(4,7),(6,4),(8,3),(10,3)$. Notice that $\Delta_{4,4}$ is not a well-defined hyperbolic polygon and does not generate a hyperbolic tiling.
	
	\medskip
	Importance of Question 1 is demonstrated by a connection with another problem related to the behavior of random walks at infinity. Let $(\Gamma, \Sigma)$ be a countable group of isometries of $\mathbb{H}^2$ with a finite generating set $\Sigma= \Sigma^{-1}$. And now let us consider a random walk $X_n$, starting from $e$, defined by a \textbf{generating} probability measure $\mu$ on $\Sigma$, i.e., such that the semigroup generated by the support of $\mu$ equals $\Gamma$.
		
	Recall that almost every sample path of the random walk $(X_n)$ converges to an element of the Gromov boundary $\partial \Gamma$, which is homeomorphic to $S^1$. First results of such kind were discovered by Furstenberg (see \cite{furstenberg1963noncommuting}, \cite{furstenberg71}), some of the more recent results are obtained in \cite{PSMIR_1994___2_A4_0}, \cite{kaimanovich2000poisson}, and \cite{maher2018random}. Therefore, $(X_n)$ induces a measure $\mu_\infty$ on $\partial \mathbb{H}^2 = S^1$, which is called the \textit{hitting measure}. This measure is equivalent to the harmonic measure on the Poisson boundary of $\Gamma$ due to \cite[Theorem 7.6, Theorem 7.7]{kaimanovich2000poisson}. So, one can ask this question.
	
	\medskip
	\textbf{Question 2:} is the hitting measure equivalent to the Lebesgue measure?
	
	As it turns out, there is a strong connection between Question 1 and Question 2. It is illustrated by the results proven in \cite[Corollary 1.4, Theorem 1.5]{blachere2011harmonic} and in \cite{tanaka2017dimension}. For the convenience of the reader we will summarize the results in the following theorem.
	\begin{theorem}[{\cite[Corollary 1.4, Theorem 1.5]{blachere2011harmonic}, \cite{tanaka2017dimension}}]
		\label{main tool}
		Let $\Gamma$ be a non-elementary hyperbolic group acting geometrically on $\mathbb{H}^2$, endowed with the geometric distance $d = d_{\mathbb{H}^2}$ induced from the action of $\Gamma$. Consider a generating probability measure $\mu$ on $\Gamma$ with finite support. Let us also assume that $\mu$ is symmetric. Then the following conditions are equivalent:
		\begin{enumerate}[label=(\arabic*)]
			\item The equality $h_{\mu} = l_{d, \mu} v_d$ holds.
			\item The Hausdorff dimension of the exit measure $\mu_\infty$ on $S^1$ is equal to $1$.
			\item The measure $\mu_\infty$ is equivalent to the Lebesgue measure on $S^1$.
			\item There exists a constant $C > 0$ such that for any $g \in \Gamma$ we have
			\[
			| v_{d} d(e, g) - d_\mu(e, g) | \le C,
			\]
			where $d_\mu(e,g) = - \log(F_\mu(e,g))$ denotes the \textbf{Green metric} associated to $\mu$.
		\end{enumerate}
	\end{theorem}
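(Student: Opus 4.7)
The plan is to establish the equivalences cyclically by exploiting the \emph{Green metric} $d_\mu(e,g)=-\log F_\mu(e,g)$, where $F_\mu(e,g)$ is the probability that the walk ever visits $g$, as a bridge between probabilistic and geometric data. Two structural identities drive everything: the Green metric satisfies $l_{d_\mu,\mu}=h_\mu$ (its drift coincides with the Avez entropy, essentially by the definition of $d_\mu$ and the Shannon--McMillan--Breiman theorem applied to the random walk) and its volume growth $v_{d_\mu}=1$ (which comes from summing the Green function and comparing with the bound $\sum_g F_\mu(e,g)^{s}<\infty$ for $s>1$). With these identities in hand, $d_\mu$ plays the role of a canonical distance for which the fundamental inequality is always an equality, and the question becomes one of comparing $d_\mu$ to $v_d\cdot d_{\mathbb{H}^2}$.

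For $(1)\Leftrightarrow(4)$, I would argue as follows. Substituting $h_\mu=l_{d_\mu,\mu}$ into the fundamental inequality, equality in (1) amounts to $\mathbb{E}[d_\mu(e,X_n)] - v_d\,\mathbb{E}[d(e,X_n)] = o(n)$. This alone is not sufficient, but because the Green metric and the geometric metric are both hyperbolic and the walk has finite range, one can promote this averaged equality to the \emph{uniform} bound in (4) via Ancona inequalities and an argument on the space of trajectories: the two distances are quasi-isometric to the word metric, and equality of the linear growth rates together with uniform concentration (controlled by exponential moments and the strong Markov property) forces bounded multiplicative discrepancy. Running the implication backwards is immediate: (4) gives $v_d\,l_{d,\mu}=l_{d_\mu,\mu}=h_\mu$. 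The equivalence $(1)\Leftrightarrow(2)$ then follows from the Ledrappier-type dimension formula $\dim_H(\mu_\infty)=h_\mu/(l_{d,\mu}v_d)$, which in turn rests on comparing the shadow of a ball of radius $n$ on $\partial\mathbb{H}^2$ with the visual metric and applying the mass distribution principle; this is the contribution exploited by Tanaka.

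The main obstacle is the implication $(2)\Rightarrow(3)$, since $(3)\Rightarrow(2)$ is trivial (Lebesgue has Hausdorff dimension $1$ on $S^1$). The point is that $\mu_\infty$ is a quasi-conformal measure for the boundary action of $\Gamma$, and one must show that a quasi-conformal measure of maximal dimension on $S^1$ must already be equivalent to Lebesgue. Here I would combine the ergodicity of the boundary action with a rigidity argument: the Radon--Nikodym cocycle of $\mu_\infty$ with respect to $\Gamma$ is Hölder, so if the associated topological pressure / conformal dimension is extremal, the two cocycles must coincide up to a coboundary, which at the level of measures yields absolute continuity. This is the delicate step where I would follow Tanaka, as standard large-deviation methods give only the dimension formula but not the jump from full dimension to absolute continuity; the closure of the gap requires genuine conformal-rigidity input.
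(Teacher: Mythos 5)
This statement is not proved in the paper at all: it is quoted verbatim from Blach\`ere--Ha\"issinsky--Mathieu and Tanaka, so the only fair comparison is with those works. Your outline does reproduce their architecture correctly at the level of ingredients: the Green metric $d_\mu$, the identity $l_{d_\mu,\mu}=h_\mu$ (Green speed equals entropy), the fact that harmonic measure behaves like a conformal density of exponent $1$ for $d_\mu$, the dimension formula $\dim_H\mu_\infty=h_\mu/(l_{d,\mu}v_d)$, and a rigidity step turning maximal dimension into absolute continuity. So the skeleton is the right one.

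However, as a proof the proposal has genuine gaps at exactly the two hard points. First, your passage $(1)\Rightarrow(4)$ does not work as described: equality $h_\mu=l_{d,\mu}v_d$, rewritten as $\mathbb{E}[d_\mu(e,X_n)]-v_d\,\mathbb{E}[d(e,X_n)]=o(n)$, is a statement about averages along typical sample paths, and no amount of concentration (exponential moments, strong Markov property, Ancona inequalities) upgrades it to a \emph{uniform} additive bound $|v_d\,d(e,g)-d_\mu(e,g)|\le C$ over \emph{all} $g\in\Gamma$; most group elements are simply not visited by typical trajectories, so deviation estimates say nothing about them. In the cited proof this implication is obtained through the boundary: the harmonic measure is an ergodic quasiconformal measure for $d_\mu$, the Lebesgue (Patterson--Sullivan) measure is quasiconformal for $v_d\,d$, the equality of the dimensions forces the two measures to be equivalent, and only then do the two shadow lemmas convert equivalence of the densities into the uniform estimate (4); your sketch omits precisely this mechanism. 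Second, the implication $(2)\Rightarrow(3)$ is the heart of the rigidity statement, and your paragraph about H\"older Radon--Nikodym cocycles and coboundaries is an appeal to ``follow Tanaka'' rather than an argument. Minor point: $v_{d_\mu}=1$, i.e.\ finiteness of $\sum_g F_\mu(e,g)^s$ for $s>1$, is itself a consequence of Ancona's inequalities and not a near-definitional fact. In short, the proposal is a correct citation-level summary of the external proof, but the two decisive steps are either replaced by an invalid shortcut or deferred to the references.
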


	One might consider this theorem as a powerful method which can be used to tackle Question 1 and Question 2 at the same time.

	In the case when the distance $d$ is the word metric, the authors of \cite{gouezel2018entropy} used \cite[Corollary 1.4, Theorem 1.5]{blachere2011harmonic} along with an elegant cocycle argument to get the following result:
	
	\begin{theorem}{\cite[Theorem 1.3]{gouezel2018entropy}}
		\label{word metric}
		Let $(\Gamma, \Sigma)$ be a non-elementary non-virtually free hyperbolic group equipped with a generating measure $\mu$. Then, for $d = d_w$, the word metric, we have
		\[
		h_{\mu} < l_{d, \mu} v_d.
		\]
	\end{theorem}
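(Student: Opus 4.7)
The plan is to argue by contradiction: assume $h_\mu = l_{d_w,\mu} v_{d_w}$ and use Theorem~\ref{main tool} to convert this equality into a rigid comparison between the word metric and the Green metric, then rule it out by a length-spectrum/cocycle argument that crucially uses non-virtual-freeness. Concretely, the implication $(1) \Rightarrow (4)$ of Theorem~\ref{main tool} produces a constant $C > 0$ such that
\[
\bigl|\, v_{d_w}\cdot |g|_w - d_\mu(e,g)\,\bigr| \le C \qquad \text{for every } g \in \Gamma,
\]
so the Green metric and the rescaled word metric $v_{d_w}\,d_w$ are at bounded distance. Applying this inequality to $g^n$ and dividing by $n$ gives, for every infinite-order $g \in \Gamma$, the identity of stable translation lengths
\[
\tau_\mu(g) \;=\; v_{d_w}\,\tau_w(g),
\]
where $\tau_w(g) := \lim_n |g^n|_w / n$ and $\tau_\mu(g) := \lim_n d_\mu(e, g^n)/n$.

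Next I would invoke a classical arithmeticity statement for stable word lengths in word-hyperbolic groups: there exists an integer $N = N(\Gamma,\Sigma) \ge 1$ such that $\tau_w(g) \in \tfrac{1}{N}\mathbb{Z}$ for every $g \in \Gamma$ (a theorem of Gromov, later reproved by Delzant). Consequently every Green-metric translation length $\tau_\mu(g)$ would lie in the discrete subgroup $\tfrac{v_{d_w}}{N}\mathbb{Z} \subset \mathbb{R}$. The problem thus reduces to showing that this cannot happen: the additive subgroup of $\mathbb{R}$ generated by the Green translation lengths $\{\tau_\mu(g)\}$ must be \emph{dense} in $\mathbb{R}$ whenever $\Gamma$ is non-elementary and not virtually free.

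This density statement is where the elegant cocycle argument of \cite{gouezel2018entropy} enters, and it is the step I expect to be the main obstacle. The Green Busemann cocycle $\beta_\mu(g,\xi) := \lim_{g_n \to \xi}(d_\mu(e,g_n) - d_\mu(g,g_n))$ represents, up to sign, the logarithmic Radon-Nikodym derivative of the $\Gamma$-action on the hitting measure $\mu_\infty$, and its evaluations on periodic orbits recover the translation lengths $\tau_\mu(g)$. A Livshits-type argument using topological transitivity of $\Gamma$ acting on $\partial\Gamma \times \partial\Gamma$ should show that if $\{\tau_\mu(g)\}$ were contained in $c\mathbb{Z}$ for some $c > 0$, then $\beta_\mu$ would be cohomologous to a $c\mathbb{Z}$-valued cocycle; combined with the regularity of the hitting measure (Ancona-type inequalities for the Green function), this forces a discrete structure on $\partial\Gamma$ that is only compatible with $\Gamma$ being virtually free — free groups being the prototypical setting in which the length spectrum really is arithmetic. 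The sharpness of the non-virtually-free hypothesis in the statement reflects exactly this dichotomy, and proving the non-arithmetic alternative in full generality is the technical heart of the result.
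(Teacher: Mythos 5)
First, a point of order: the paper does not prove Theorem~\ref{word metric} at all. It is quoted verbatim from \cite{gouezel2018entropy} (Theorem 1.3 there), and the only thing the paper says about its proof is the remark that the ``cohomological machinery\dots heavily relies on the fact that $d_w$ is an integer-valued distance.'' So there is no in-paper argument to compare yours against; I can only measure your attempt against the argument in the cited reference. Your first half is the correct and standard reduction, and it does match the opening move of \cite{gouezel2018entropy}: assuming $h_\mu = l_{d,\mu} v_d$, the Blach\`ere--Ha\"issinsky--Mathieu equivalence gives $\bigl| v_{d_w} |g|_w - d_\mu(e,g) \bigr| \le C$, hence $\tau_\mu(g) = v_{d_w}\,\tau_w(g)$ for all $g$, and the Gromov--Delzant arithmeticity $\tau_w(g) \in \tfrac{1}{N}\mathbb{Z}$ places the entire Green length spectrum in the discrete subgroup $\tfrac{v_{d_w}}{N}\mathbb{Z}$. (One caveat: Theorem~\ref{main tool} as stated in this paper covers only $d = d_{\mathbb{H}^2}$ and symmetric $\mu$, so for the word metric and a general admissible measure you must invoke the original versions in \cite{blachere2011harmonic} and \cite{gouezel2018entropy} rather than Theorem~\ref{main tool} itself.)

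The genuine gap is the step you yourself flag as the ``main obstacle.'' You assert that for a non-elementary, non-virtually-free hyperbolic group the Green translation lengths must generate a dense subgroup of $\mathbb{R}$, and you support this only with a heuristic: a Livshits-type argument that would ``force a discrete structure on $\partial\Gamma$ that is only compatible with $\Gamma$ being virtually free.'' As written this is not an argument --- you never explain how discreteness of the length spectrum (or of the Radon--Nikodym cocycle of $\mu_\infty$) produces an action on a tree or any other certificate of virtual freeness, and that implication is exactly where all of the work in \cite{gouezel2018entropy} is concentrated. Note that everything you establish before this point is equally true for the free group with its standard generators and the simple random walk, where $d_\mu$ really is at bounded distance from $v_{d_w} d_w$ and $h = lv$ holds; so no contradiction can be extracted from discreteness of the spectrum alone, and the non-virtual-freeness hypothesis must enter through a concrete geometric mechanism. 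In \cite{gouezel2018entropy} that mechanism is not a boundary rigidity theorem but, roughly, the fact that the Cayley graph of a non-virtually-free hyperbolic group is not quasi-isometric to a tree, which supplies arbitrarily wide geodesic configurations that are then played against the rigid identity $d_\mu = v_{d_w} d_w + O(1)$. Without that input, or some substitute for it, the proof is incomplete at its core.
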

	\noindent
	\textbf{Remark.} It is worth noting that the cohomological machinery which is used to prove this theorem heavily relies on the fact that $d_w$ is an integer-valued distance.
	
	It is a well-known fact (see \cite[Theorem 4.2]{vershik2000}) that for simple random walks on free groups $F_n$ we have $h = lv$, so we have to require the group to be non-virtually free. This is a very powerful result because a lot of interesting non-elementary hyperbolic groups are not virtually free. However, Question 1 is still open for geometric distances induced from geometric actions on $\mathbb{H}^2$. In the non-cocompact case it is known that $h < lv$, see \cite{zbMATH04201213}, \cite{2018arXiv181110849D} or \cite{2019arXiv190411581R}.
	
	\subsection{Our approach}
	
	In this paper we attempt to solve Question 1 for $\Gamma_{n,m}$. Firstly, we prove that $v_{d_{\mathbb{H}^2}} = 1$. For simplicity, let's assume for the moment that we consider the simple random walk on $\Gamma_{n,m}$. The idea is to find a hyperbolic element $g \in \Gamma$ and a point $x_0 \in \mathbb{H}^2$ such that
	\begin{itemize}
		\item $d_{\mathbb{H}^2}(e, g^k) = k d_{\mathbb{H}^2}(e, g)$,
		\item $k d_{\mathbb{H}^2}(e, g) > k |g| \log(|\Sigma|) \ge d_{\mu}(e, g^k)$.
	\end{itemize}
	\noindent
	Then the implication $(4) \Rightarrow (1)$ in Theorem \ref{main tool} implies that $h < lv$.
	
	In the case when $\Gamma = \Gamma_{n,m}$ we can take $\Sigma = \{ r_i \}$, and
	\begin{itemize}
		\item the translation $g = r_1 r_{\frac{n}{2}+1}$ in the case when $n > 3$ is even
		\item the translation $g = r_1 r_{\frac{n+1}{2}}$ in the case when $n > 3$ is odd.
	\end{itemize}
	and compute $d_{\mathbb{H}^2}(e, g)$ explicitly, as shown in Propositions \ref{even case} and \ref{odd case}. This gives us a proof of Theorem \ref{my theorem}.
	
	\noindent
	\textbf{Remark.} It is easily seen that if there exists a point $x_0$ such that $h = lv$ for $d_{\mathbb{H}^2}$, then $h = lv$ for \textit{every} choice of $x_0 \in X$ due to the triangle inequality and Theorem \ref{main tool}(4). Also, keep in mind that this approach will not work for $n=3$, because all sides of a triangle are adjacent to each other.

    In Section \ref{Fuchsian groups} we apply our methods to some Fuchsian groups associated with $\Delta_{n,m}$, as well, thus proving Theorem \ref{my Fuchsian theorem}.
	
	\subsection*{Acknowledgments}
	The author is immensely grateful to Giulio Tiozzo for helpful discussions and bringing this problem to his attention. Also we would like to thank Sebastien Gou\"ezel and Vadim Kaimanovich for valuable comments and suggestions which helped to improve this paper.

	\section{Definitions}
	\subsection{Hyperbolic groups}
	\begin{definition}
		\label{hyperbolic space}
		A geodesic metric space $(X, d)$ is called a \textbf{hyperbolic space} if there exists $\delta> 0$ such that for any geodesic triangle $[x,y]\cup[y,z]\cup[z,x] := \Delta(x,y,z)$ and for any $p \in [x,y]$ there exists $q \in [y,z]\cup[z,x]$ so that $d(p,q) < \delta$. 
	\end{definition}
	
	\begin{definition}
		\label{hyperbolic group}
		Let $G$ be a finitely generated group. TFAE:
		\begin{enumerate}
			\item The Cayley graph $(\Gamma(G, S), d_w)$ is hyperbolic for some generating set $S$
			\item The Cayley graph $(\Gamma(G, S), d_w)$ is hyperbolic for \textit{every} generating set $S$
		\end{enumerate}
		If at least one property holds for $G$, then $G$ is called a \textbf{word-hyperbolic group}.
	\end{definition}

	\begin{definition}
		Finite groups and virtually cyclic groups are called \textbf{elementary} hyperbolic groups.
	\end{definition}
	
	\begin{definition}
		An (isometric) action of a group $G$ on a metric space $X$ is
		\begin{enumerate}
			\item \textbf{properly discontinuous}, if for any compact $K \subset X$ the set
			\[
			\{g \in G \ | \ g K \cap K \neq \emptyset\} \text{ is finite.}
			\]
			\item \textbf{cocompact}, if $X / G$ is compact.
			\item \textbf{geometric}, if it is properly discontinuous and cocompact. 
		\end{enumerate}
	\end{definition}
	
	Recall the well-known Milnor-\v{S}varc lemma:
	\begin{lemma}[Milnor-\v{S}varc lemma]
		\label{Milnor-Svarc}
		A finitely generated group $G$ is word-hyperbolic if and only if $G$ admits a geometric action on a proper hyperbolic metric space $(X, d)$. Moreover, the orbit map
		\[
		t_x : (G, d_w) \rightarrow X, \quad t_x(g) = g.x,
		\]
		is a quasi-isometry.
	\end{lemma}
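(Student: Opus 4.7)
The plan is to prove the two directions separately, with most of the work going into the $(\Leftarrow)$ implication, which is the heart of the classical Milnor-Švarc argument.

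For the $(\Rightarrow)$ direction, assume $G$ is word-hyperbolic. Then by Definition \ref{hyperbolic group}, its Cayley graph $(\Gamma(G,S), d_w)$ is hyperbolic for some finite generating set $S$. This graph is proper (balls are finite since $S$ is finite and $G$ is finitely generated), and $G$ acts on it by left multiplication: this action is isometric, properly discontinuous (stabilizers of vertices are trivial), and cocompact (the quotient is a bouquet of $|S|$ edges), hence geometric. Taking $x$ to be the identity vertex, the orbit map is tautologically an isometry onto the vertex set, hence a quasi-isometry.

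For the $(\Leftarrow)$ direction, suppose $G$ acts geometrically on a proper hyperbolic space $(X,d)$ and fix $x \in X$. I would proceed in four steps. First, produce a finite generating set. By cocompactness, there exists $R > 0$ such that the $G$-translates of $\overline{B}(x,R)$ cover $X$; by properness and proper discontinuity the set $S := \{g \in G : g.\overline{B}(x,R) \cap \overline{B}(x,3R) \neq \emptyset\}$ is finite. Second, show that $S$ generates $G$: given $g \in G$, connect $x$ to $g.x$ by a geodesic, subdivide it into pieces of length at most $R$ so that consecutive subdivision points $y_0 = x, y_1, \ldots, y_N = g.x$ satisfy $d(y_i, y_{i+1}) \leq R$, choose $g_i \in G$ with $y_i \in g_i.\overline{B}(x,R)$ (taking $g_0 = e$, $g_N = g$), and observe that $g_i^{-1} g_{i+1} \in S$, giving $g = g_0^{-1} g_N$ as a product of at most $N$ elements of $S$. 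Third, show the orbit map $t_x$ is a quasi-isometry: the upper bound $d(x, g.x) \leq (\max_{s \in S} d(x, s.x)) \cdot d_w(e,g)$ follows immediately from the triangle inequality applied along a word of length $d_w(e,g)$, while the lower bound $d_w(e,g) \leq \frac{1}{R} d(x, g.x) + 1$ follows from the subdivision argument above. Finally, since $t_x$ is a quasi-isometry from $(G, d_w)$ to $(X,d)$ and both are geodesic (after replacing $(G, d_w)$ by its Cayley graph), the hyperbolicity of $X$ transfers to the Cayley graph by the standard quasi-isometric invariance of hyperbolicity for geodesic spaces, so $G$ is word-hyperbolic.

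The main obstacle is the second step, producing the generating set and establishing $S$ generates. The subdivision-and-fellow-traveler argument is the technical core; properness of $X$ and proper discontinuity of the action are both essential to ensure $S$ is finite, and one must be careful to choose the radius $3R$ (rather than $2R$) so that consecutive ball-choosers $g_i, g_{i+1}$ actually have $g_i^{-1} g_{i+1} \in S$. Once $S$ and the subdivision are set up correctly, both quasi-isometry estimates fall out of the same construction, and the rest of the argument is an appeal to the quasi-isometric invariance of $\delta$-hyperbolicity, which I would cite rather than reprove.
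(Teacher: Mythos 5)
The paper does not prove this lemma at all: it is recalled as a classical result (Milnor--\v{S}varc) and used as background, so there is no internal proof to compare against. Your argument is the standard textbook proof and is essentially correct: the forward direction via the Cayley graph, and the converse via the covering-by-translates construction of the finite generating set $S$, the telescoping product $g=(g_0^{-1}g_1)\cdots(g_{N-1}^{-1}g_N)$, the two quasi-isometry estimates, and quasi-isometric invariance of hyperbolicity for geodesic spaces (legitimate to cite, and consistent with the paper's Definition \ref{hyperbolic space}, which builds geodesicity into ``hyperbolic space''). The only point worth making explicit is coarse surjectivity of the orbit map: a quasi-isometry requires the image $G.x$ to be coarsely dense in $X$, which follows immediately from the covering of $X$ by the translates $g.\overline{B}(x,R)$ that you already set up in your first step, so this is an omission of a sentence rather than of an idea.
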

	For example, any finitely generated group which admits a geometric action on the hyperbolic space $\mathbb{H}^n$ for $n \ge 2$ is a word-hyperbolic group.
	
	\subsection{Random walks and the Green metric}
	\begin{definition}
		Let $(G, S)$ be a finitely generated group. A \textbf{random walk} on $G$ is an infinite sequence of $G$-valued random variables of form
		\[
		X_n = X_0 \xi_1 \dots \xi_n,
		\]
		where $\xi_i$ are i.i.d. $G$-valued random variables, and $X_0$ (initial distribution) is independent from $\xi_i$. If $\xi_i$ take values in $S$ then we say that $(X_n)$ is a \textbf{nearest-neighbor random walk}. If, in addition, $\xi_1$ is uniformly distributed then we will call $(X_n)$ a \textbf{simple} random walk.
	\end{definition}
	\noindent
	\textbf{Remark.} In this paper we only consider nearest-neighbor random walks which start at $e \in G$. Such random walks are uniquely defined by a probability measure $\mu$ on $S$.
	
	Denote the distribution of $X_0$ and $\xi$ by $\mu_0$ and $\mu$, respectively. Then the distribution of $X_n$ is denoted by $\mu_n$. Also, define the first-entrance function $F_{\mu}(x, y)$ as follows:
	\[
	F_{\mu}(x, y) := \mathbb{P}^x(\exists n : X_n = y) = \mathbb{P}^e(\exists n : X_n = x^{-1}y).
	\]
	This also allows us to define the \textbf{Green metric} as follows:
	\[
	d_{\mu}(x, y) := -\log(F_{\mu}(x, y)) \quad \text{for all } x,y \in G.
	\]
	Observe that if $g = s_1 \dots s_{k}$ is a minimal representation of $g$, so that $k = |g|$, where $|g| := d_w(e, g)$ denotes the distance from $e$ to $g$ with respect to the word metric. Therefore,
	
	\begin{equation}
	\label{general naive estimate}
	\mu(s_1) \dots \mu(s_k) \le F_{\mu}(e, g).
	\end{equation}
	
	In particular, for the uniform measure $\mu$ we get

	\begin{equation}
	\label{naive estimate}
		|S|^{-|g|} \le F_{\mu}(e, g),
	\end{equation}
	The proof of \eqref{general naive estimate} is extremely short:
	\begin{equation}
		\label{stronger}
		\mu(s_1) \dots \mu(s_k) = \mathbb{P}^e(\xi_1 = s_1, \dots, \xi_{|g|} = s_{|g|}) \le F_{\mu}(e, g).
	\end{equation}
	
	\section{The main results}
	\subsection{Reflection groups}
	This lemma is a basic and well-known result related to the hyperbolic circle problem.
	\begin{lemma}
		\label{logarithmic volume of gammas}
		For any $n, m$ the logarithmic volume $v$ of $(\Gamma_{n,m}, d_{\mathbb{H}^2})$ equals $1$.
	\end{lemma}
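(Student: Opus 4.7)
The plan is to apply the classical hyperbolic lattice point counting argument for cocompact discrete subgroups of $\mathrm{Isom}(\mathbb{H}^2)$. Since the action of $\Gamma_{n,m}$ on $\mathbb{H}^2$ is geometric, there is a fundamental domain $\mathcal{D}$ with finite hyperbolic area $A := \mathrm{vol}_{\mathbb{H}^2}(\mathcal{D}) < \infty$ and finite diameter $D := \mathrm{diam}_{\mathbb{H}^2}(\mathcal{D}) < \infty$. In the reflection group setting one can just take $\mathcal{D} = \Delta_{n,m}$, and pick the basepoint $x_0$ in the interior of $\Delta_{n,m}$, so that $\mathrm{Stab}_{x_0} = \{e\}$ (every interior point of the fundamental polygon has trivial stabilizer).

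The next step is to sandwich $|B_n|$ between the volumes of two hyperbolic balls. Let $\mathcal{B}(x_0, R)$ denote the open hyperbolic ball of radius $R$ around $x_0$. For the upper bound, if $g \in B_n$ then $g \cdot \mathcal{D} \subset \mathcal{B}(x_0, n+D)$, and since the translates $\{g \cdot \mathcal{D}\}_{g \in \Gamma_{n,m}}$ have pairwise disjoint interiors, this yields
\[
|B_n| \cdot A \;\le\; \mathrm{vol}_{\mathbb{H}^2}\bigl(\mathcal{B}(x_0, n+D)\bigr).
\]
For the lower bound, since the translates tile $\mathbb{H}^2$, every point $y \in \mathcal{B}(x_0, n-D)$ lies in some $g \cdot \mathcal{D}$, and the triangle inequality forces $d_{\mathbb{H}^2}(x_0, g \cdot x_0) \le n$, i.e.\ $g \in B_n$. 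Hence
\[
\mathrm{vol}_{\mathbb{H}^2}\bigl(\mathcal{B}(x_0, n-D)\bigr) \;\le\; |B_n| \cdot A.
\]

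To finish, I substitute the classical formula $\mathrm{vol}_{\mathbb{H}^2}(\mathcal{B}(x_0, R)) = 2\pi(\cosh R - 1) \sim \pi e^R$, take logarithms and divide by $n$. The additive constants coming from $D$, $A$, and the prefactor $\pi$ vanish in the limit, and the sandwich simultaneously shows that the limit defining $v_{d_{\mathbb{H}^2}}$ exists and equals $1$.

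I do not anticipate any real obstacle here: the argument is entirely classical (a baby version of the hyperbolic lattice point problem). The only minor subtleties are ensuring $\mathrm{Stab}_{x_0} = \{e\}$ so that $d_{\mathbb{H}^2}$ is a genuine left-invariant metric on $\Gamma_{n,m}$ (handled by placing $x_0$ in the interior of $\Delta_{n,m}$), and invoking cocompactness of the action to guarantee $A, D < \infty$; both are immediate from the standing hypotheses.
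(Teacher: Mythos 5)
Your proposal is correct and follows essentially the same route as the paper: both sandwich the orbit count between areas of hyperbolic balls of radii $n \mp (\text{diameter of } \Delta_{n,m})$ divided by the area of the fundamental polygon, and then use $4\pi\sinh^2(R/2) = 2\pi(\cosh R - 1) \sim \pi e^{R}$ to conclude $v = 1$. No issues.
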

	\begin{proof}
		Denote
		\[
		\# B_R =  |\{ g \in \Gamma_{n,m} : d(x_0, g.x_0) \le R \}|.
		\]
		Let $D_R$ denote the union of the polygons which intersect the closed hyperbolic disk  $B_{\mathbb{H}^2}(x_0,R)$. If we denote the diameter of $\Delta_{n,m}$ by $A$, then
		\[
		B_{\mathbb{H}^2}(x_0,R-A) \subset D_R \subset B_{\mathbb{H}^2}(x_0,R+A),
		\]
		and
		\begin{small}
			\[
			\frac{4\pi \sinh^2((R-A)/2)}{\text{Area}(\Delta_{n,m})} = \frac{\text{Area}(B_{\mathbb{H}^2}(x_0,R-A))}{\text{Area}(\Delta_{n,m})} \le \# B_R \le \frac{\text{Area}(B_{\mathbb{H}^2}(x_0,R+A))}{\text{Area}(\Delta_{n,m})} = \frac{4\pi \sinh^2((R+A)/2)}{\text{Area}(\Delta_{n,m})}.
			\]
		\end{small}

		It is easily seen that $4\pi \sinh^2\left(\frac{R \pm A}{2} \right) \sim e^R$, which immediately yields $v = 1$.		
	\end{proof}

	\begin{lemma}
		\label{main lemma}
		Consider a nearest-neighbour random walk $(X_n)$ on $\Gamma_{n,m}$ such that $\mu(r_i) > 0$ for all $1 \leq i \leq n$. Let $g \in \Gamma_{n,m}$ be a hyperbolic element, in other words, there exists a line $\xi \subset \mathbb{H}^2$ such that $d_{\mathbb{H}^2}(x, g.x) = L > 0$ for all $x \in \xi$. If $g = r_1 \dots r_{|g|}$ and
		\begin{equation}
		\label{the thing we need to check}
			L > - \sum_{i=1}^{|g|} \log(\mu(r_i)),
		\end{equation}
		then
		\begin{equation}
			\label{infinite supremum}
			\sup_{k \rightarrow \infty} | d_\mu(e, g^k) - d_{\mathbb{H}^2}(e, g^k) | = \infty.
		\end{equation}
		for \textbf{any} $x_0 \in \mathbb{H}^2$.
	\end{lemma}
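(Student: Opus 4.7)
The plan is to bound the two distances $d_\mu(e, g^k)$ and $d_{\mathbb{H}^2}(e, g^k)$ separately, from above for the Green metric and from below for the hyperbolic distance, and to read off a linear-in-$k$ gap from the hypothesis \eqref{the thing we need to check}. That gap immediately forces the supremum in \eqref{infinite supremum} to be infinite, independently of the choice of $x_0$.

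For the upper bound on the Green metric, I would apply the probabilistic argument behind \eqref{stronger} to the (not necessarily reduced) word representation $g^k = (r_1 \dotsb r_{|g|})^k$ of length $k|g|$. Since every letter is a reflection in the support of $\mu$, the event $\{\xi_1 = r_1, \dots, \xi_{k|g|} = r_{|g|}\}$ realizes $X_{k|g|} = g^k$, which gives
\[
F_\mu(e, g^k) \;\ge\; \prod_{i=1}^{|g|} \mu(r_i)^{k},
\]
and hence $d_\mu(e, g^k) \le -k \sum_{i=1}^{|g|} \log \mu(r_i)$.

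For the lower bound on $d_{\mathbb{H}^2}$, I would use the standard fact that for any hyperbolic isometry $h$ of $\mathbb{H}^2$ the translation length $L(h) = \inf_y d(y, h y)$ satisfies $d(y, h y) \ge L(h)$ for all $y$ (the infimum being realized on the axis of $h$), and that $L(g^k) = k L(g) = k L$. Applied to $h = g^k$ with $y = x_0$, this yields
\[
d_{\mathbb{H}^2}(e, g^k) \;=\; d_{\mathbb{H}^2}(x_0, g^k.x_0) \;\ge\; kL
\]
for every choice of base point $x_0 \in \mathbb{H}^2$.

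Combining the two estimates gives
\[
d_{\mathbb{H}^2}(e, g^k) - d_\mu(e, g^k) \;\ge\; k\!\left( L + \sum_{i=1}^{|g|} \log \mu(r_i) \right)\!,
\]
and by the standing hypothesis \eqref{the thing we need to check} the bracketed quantity is strictly positive, so the right-hand side tends to $+\infty$ as $k \to \infty$. This proves \eqref{infinite supremum}, uniformly in $x_0$. I do not expect any genuine obstacle in this lemma: its content is simply that the crudest lower bound on $F_\mu$ already suffices to break a coarse equivalence between $d_{\mathbb{H}^2}$ and $d_\mu$ once \eqref{the thing we need to check} holds. The real work of the paper is pushed to the subsequent sections, where one must exhibit an explicit hyperbolic element $g$ in $\Gamma_{n,m}$ and verify \eqref{the thing we need to check} by computing its translation length $L$ and its word length $|g|$.
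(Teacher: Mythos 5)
Your proposal is correct and follows essentially the same argument as the paper: the crude path estimate \eqref{stronger} applied to the word $(r_1\dots r_{|g|})^k$ bounds $d_\mu(e,g^k)$ from above, while the translation length gives the linear lower bound $kL$ on $d_{\mathbb{H}^2}(e,g^k)$, so the gap grows linearly under \eqref{the thing we need to check}. The only (harmless) difference is cosmetic: the paper picks $x_0$ on the axis to get $d_{\mathbb{H}^2}(e,g^k)=kL$ exactly and then disposes of the base-point dependence by the triangle inequality, whereas you invoke $d(y,g^k y)\ge L(g^k)=kL$ for every $y$ directly.
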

	\begin{proof}
		Choose a point $x_0 \in \xi$, so that
			\[
			d_{\mathbb{H}^2}(e, g^k) = d_{\mathbb{H}^2}(x_0, g^k.x_0) = k L.
			\]
		Due to \eqref{stronger} we obtain
			\[
			d_{\mathbb{H}^2}(e, g^k) - d_\mu(e, g^k) > k L - k \sum_{i=1}^{|g|} \log\left(\frac{1}{\mu(r_i)} \right) = k  \left(L - \sum_{i=1}^{|g|} \log\left(\frac{1}{\mu(r_i)} \right) \right).
			\]
			Due to \eqref{the thing we need to check}, the value $k  \left(L - \sum_{i=1}^{|g|} \log\left(\frac{1}{\mu(r_i)} \right) \right)$ goes to infinity when $k \rightarrow \infty$.
			
			We finish the argument by observing that the choice of $x_0$ doesn't matter due to the triangle inequality.
	\end{proof}

	Now we are going to prove Theorem \ref{my theorem} be considering the cases of even and odd $n$ separately.

	\subsubsection{Even case}
	\label{Even case}
	\begin{theorem}
		Consider the simple random walk $(X_n)$ on $\Gamma_{n,m}$ for even $m, n \ge 4$. If
			\begin{equation}
				4 \text{arccosh} \, \left( \dfrac{\cos(\pi /m)}{\sin(\pi/n)} \right) > 2 \log(n)
			\end{equation}
		then
		\[
		h < lv.
		\]
	\end{theorem}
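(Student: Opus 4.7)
The plan is to exhibit a single hyperbolic element $g$ whose powers witness an unbounded gap between $d_{\mathbb{H}^2}$ and the Green metric $d_\mu$, feed it into Lemma \ref{main lemma}, and then invoke the implication $(4)\Rightarrow(1)$ of Theorem \ref{main tool}. The natural candidate is
\[
g = r_1 r_{\frac{n}{2}+1},
\]
the product of reflections in two opposite sides of $\Delta_{n,m}$. Since $n$ is even, these two geodesics are symmetric with respect to the center of the polygon, share a common perpendicular through it, and are ultraparallel, so $g$ is a hyperbolic translation along that perpendicular.

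First I would compute the translation length $L$ of $g$. Using the classical fact that the composition of reflections in two ultraparallel geodesics is a translation by twice the distance between them, combined with the symmetry that places the two opposite sides at distance $2\rho$, where $\rho$ is the apothem of $\Delta_{n,m}$, one obtains $L = 4\rho$. The apothem is then determined by standard hyperbolic right-triangle trigonometry applied to the triangle with vertices the center of the polygon, the midpoint of a side, and an endpoint of that side (angles $\pi/n$, $\pi/2$, $\pi/m$ respectively); the identity $\cos(\pi/m) = \cosh(\rho)\sin(\pi/n)$ then yields
\[
L = 4\,\text{arccosh}\!\left(\dfrac{\cos(\pi/m)}{\sin(\pi/n)}\right).
\]

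Next, I would verify the hypothesis of Lemma \ref{main lemma}. Since $r_1$ and $r_{n/2+1}$ are distinct reflections whose product is not itself a reflection, the word length is $|g| = 2$. For the simple random walk $\mu(r_i) = 1/n$, so $-\sum_{i=1}^{|g|}\log\mu(r_i) = 2\log(n)$, and the hypothesis of the theorem is exactly condition \eqref{the thing we need to check}. Lemma \ref{main lemma} then yields \eqref{infinite supremum} for every basepoint $x_0 \in \mathbb{H}^2$.

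To conclude, I would combine this with Lemma \ref{logarithmic volume of gammas}, which gives $v_{d_{\mathbb{H}^2}} = 1$, together with the symmetry and finite support of the simple random walk, so that Theorem \ref{main tool} applies. Under $v_d = 1$, condition $(4)$ becomes a uniform bound on $|d_{\mathbb{H}^2}(e,\cdot) - d_\mu(e,\cdot)|$, which is contradicted by unboundedness along $(g^k)$; hence $(1)$ fails and $h_\mu < l_{d,\mu} v_d$. The only step requiring genuine care is the geometric identification $L = 4\rho$ and the derivation of the apothem formula; the rest is bookkeeping.
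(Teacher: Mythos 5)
Your proposal is correct and follows essentially the same route as the paper: the same element $g = r_1 r_{\frac{n}{2}+1}$, the same translation length $L = 4\,\text{arccosh}\bigl(\cos(\pi/m)/\sin(\pi/n)\bigr)$ (your apothem $\rho$ is the paper's altitude $h_{n,m}$, obtained there by the hyperbolic law of cosines and by you from the right triangle, which is the same computation), and the same conclusion via Lemma \ref{main lemma}, Lemma \ref{logarithmic volume of gammas}, and Theorem \ref{main tool}. No gaps.
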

	\begin{proof}
		Let us define 
		\[
		g = r_1 r_{\frac{n}{2}+1}.
		\] 
		It is, indeed, a translation, and the vertical line $x = 0$ in the Poincar\'e disk model is precisely the axis of $g$. Then we observe that $L = d_{\mathbb{H}^2}(0, g.0) = 4 h_{n,m}$, where $h_{n,m}$ is the altitude of the hyperbolic triangle with angles $\frac{2\pi}{n}$ and $\frac{\pi}{m}, \frac{\pi}{m}$ through $0$.

		The hyperbolic law of cosines shows that 
		\begin{equation}
		\label{altitude}
		h_{n,m} = \text{arccosh}\left( \dfrac{\cos(\pi/m)}{\sin(\pi/n)} \right).
		\end{equation}

		Because $|g| = 2$, the inequality $L > - \sum_{i=1}^{|g|} \log(\mu(r_i))$ can be rewritten as
		\[
		4 \text{arccosh}\left( \dfrac{\cos(\pi/m)}{\sin(\pi/n)} \right) > 2 \log(n),
		\]
		and we can apply Lemma \ref{logarithmic volume of gammas}, Lemma \ref{main lemma} and Theorem \ref{main tool}, keeping in mind that $\Gamma_{n,m}$ is always a non-elementary hyperbolic group.
	\end{proof}
	\noindent
	\textbf{Remark.} Keep in mind that the argument works for any nearest-neighbour random walk generated by such $\mu$ that
	\[
	2 \log(n) > -\log(\mu(r_k)) - \log( \mu(r_{k + \frac{n}{2}}))
	\]
	for some $k \in \mathbb{N}$. In particular, if $\mu(r_i) = \mu(r_{i+\frac{n}{2}})$ for all $i \in \mathbb{N}$, then we can always find such $k$.

	\begin{proposition}
	\label{even case}
		The inequality 
		\begin{equation}
		\label{arccos}
		4 \text{arccosh} \, \left( \dfrac{\cos(\pi /m)}{\sin(\pi/n)} \right) > 2 \log(n)
		\end{equation}
		holds for 
		\begin{itemize}
		    \item $n \ge 4, m \ge 8$,
		    \item $n \ge 6, m \ge 5$,
		    \item $n \ge 8, m \ge 4$,
		    \item $n \ge 12, m \ge 3$.
		\end{itemize}
	\end{proposition}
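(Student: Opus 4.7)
\medskip
The plan is to reduce the inequality to a single-variable comparison and then treat each of the four ranges by a boundary computation plus a tail estimate. Using the identity $\cosh(\tfrac{1}{2}\log n) = \tfrac{n+1}{2\sqrt{n}}$ and the monotonicity of $\text{arccosh}$, the inequality \eqref{arccos} is equivalent to
\[
\cos(\pi/m) > h(n), \qquad h(n) := \frac{(n+1)\sin(\pi/n)}{2\sqrt{n}}.
\]
Since $\cos(\pi/m)$ is strictly increasing in $m$, in each of the four cases it suffices to take the smallest admissible $m = m_0 \in \{8,5,4,3\}$ with corresponding $n_0 \in \{4,6,8,12\}$ and to show $\cos(\pi/m_0) > h(n)$ for every integer $n \ge n_0$.

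\medskip
I would then split this into the corner $n = n_0$ and the tail $n \ge n_0 + 1$. For the four corners, substitute the closed-form surd values $\sin(\pi/4) = \tfrac{\sqrt{2}}{2}$, $\sin(\pi/6) = \tfrac{1}{2}$, $\sin(\pi/8) = \tfrac{1}{2}\sqrt{2-\sqrt{2}}$, $\sin(\pi/12) = \tfrac{\sqrt{6}-\sqrt{2}}{4}$ together with the corresponding values of $\cos(\pi/m_0)$, and verify $\cos(\pi/m_0) > h(n_0)$ by squaring and rationalising; each case reduces to an inequality of the form $\sqrt{k} > p/q$ with $k \in \{2,3,5\}$. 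For the tail, apply the crude estimate $\sin(\pi/n) \le \pi/n$ to obtain $h(n) \le \pi(n+1)/(2n^{3/2})$; the right-hand side is strictly decreasing in $n$, since $(n+1)/n^{3/2} = n^{-1/2} + n^{-3/2}$, so it suffices to check $\pi(n_0+2)/(2(n_0+1)^{3/2}) < \cos(\pi/m_0)$, which is elementary in each of the four cases.

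\medskip
The only delicate step is the corner $(n_0, m_0) = (12,3)$: here $h(12) = \frac{13(\sqrt{6}-\sqrt{2})}{16\sqrt{3}}$, and after squaring the desired inequality $h(12) < 1/2$ reduces to $\sqrt{3} > 290/169$, which is true but with a margin under one percent. The other three corners and all four tails leave substantially more room, so no finer analytic input — such as a direct proof that $h$ itself is monotonically decreasing, which would require a more careful calculus argument — is required.
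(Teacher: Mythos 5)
Your proof is correct, but it takes a cleaner route than the paper's. You convert \eqref{arccos} into the exact equivalence $\cos(\pi/m) > \frac{(n+1)\sin(\pi/n)}{2\sqrt{n}}$ via $\cosh(\tfrac12\log n)=\tfrac{n+1}{2\sqrt n}$, whereas the paper rewrites $\mathrm{arccosh}$ in logarithmic form and then weakens the inequality with the estimate $\sqrt{x^2-1}\ge x-\tfrac12$, which is only valid for $x\ge\tfrac54$; this forces the paper to prove an auxiliary lemma that $\cos(\pi/m)/\sin(\pi/n)\ge 1.25$ on the relevant ranges, and its weakened criterion actually fails at the tight corner $(n,m)=(12,3)$ (there $\sin(\pi/12)\tfrac{\sqrt{12}+1/2}{2}\approx 0.513>\tfrac12$), which is why that case, among others, is deferred to a separate unexplained check. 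Your exact reduction keeps just enough room to absorb $(12,3)$ directly, via $\sqrt{3}>290/169$. The second genuine difference is in the $n$-direction: the paper asserts that its right-hand function $\sin(\pi/n)\tfrac{\sqrt n+1/2}{2}$ is strictly decreasing and then compares a handful of numerically evaluated values (Mathematica-style decimals), while you avoid any monotonicity claim about the exact function by passing to the transparent decreasing majorant $\pi(n+1)/(2n^{3/2})$ for the tail $n\ge n_0+1$ and doing the four corners with closed-form surds. The trade-off: your argument is sharper, fully hand-verifiable, and needs no exceptional cases beyond the stated corners, at the cost of slightly more careful algebra at each corner; the paper's argument is shorter to write down once one accepts the numerical evaluations and the two ``verified separately'' cases. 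I checked your corner reductions ($\sqrt2>9/8$, $\sqrt5>13/12$, $\sqrt2>98/81$, $\sqrt3>290/169$) and the four tail checks at $n_0+1$; all are true, so the proposal is complete as outlined.
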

	\noindent
	\textbf{Remark.} The exact region where the inequality \eqref{even inequality} holds is illustrated by the Figure \ref{fig1}.
	\begin{figure}[h]
	    \centering
	    \includegraphics[width=10cm]{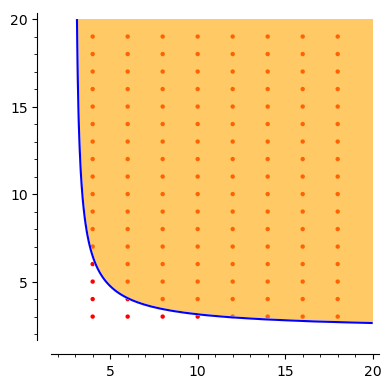}
	    \caption{The dots in the orange region correspond to the pairs $(n,m)$ for which \eqref{even inequality} holds. Keep in mind that we still require both $n > 3$ and $m > 3$ to be even, so the exceptional cases here are $(4,4),(4,6),(6,4)$.}
	    \label{fig1}
	\end{figure}
	
	\begin{proof}
		By definition, $\text{arccosh}(x) = \ln(x + \sqrt{x^2-1})$, so \eqref{arccos} is equivalent to
		\begin{equation}
		\label{even inequality}
		\left(\dfrac{\cos(\pi /m)}{\sin(\pi/n)} + \sqrt{\dfrac{\cos(\pi /m)^2}{\sin(\pi/n)^2} - 1}\right)^2 > n.
		\end{equation}
		For convenience, let us denote $f(n, m) = \dfrac{\cos(\pi /m)}{\sin(\pi/n)}$. The following lemma is a straightforward corollary from $\eqref{altitude}$.
		\begin{lemma}
			\indent
			\begin{enumerate}[label=(\arabic*)]
				\item $f(n, m)$ is a separately strictly increasing function. In other words,
				\[
				f(n, m) < f(n+1, m), \quad f(n, m) < f(n,m+1) \quad (m,n \ge 3).
				\]
				\item $f(n, m) \ge 1.25$ for $n \ge 4, m \ge 7$; $n \ge 6, m \ge 4$; and $n \ge 8, m \ge 3$.
			\end{enumerate}
		\end{lemma}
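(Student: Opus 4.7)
The plan is to derive part (1) directly from the monotonicity of $\sin$ and $\cos$ on $[0,\pi/2]$, and then to use part (1) to reduce part (2) to verifying $f(n,m) \ge 1.25$ at the three lower-left corners of the listed regions, namely $(n,m) = (4,7), (6,4), (8,3)$.

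For part (1), I would observe that for all integers $n, m \ge 3$ we have $\pi/n, \pi/m \in (0,\pi/2]$. On this interval $\cos$ is strictly decreasing and $\sin$ is strictly increasing, and both take strictly positive values. Increasing $m$ by $1$ strictly decreases $\pi/m$, hence strictly increases $\cos(\pi/m)$; increasing $n$ by $1$ strictly decreases $\pi/n$, hence strictly decreases $\sin(\pi/n)$, so that $1/\sin(\pi/n)$ strictly increases. Since both factors of $f$ are positive, $f$ is separately strictly increasing in each argument.

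For part (2), part (1) implies that on each of the three rectangular regions listed in the statement, $f$ attains its minimum at the corner with the smallest $n$ and smallest $m$. It therefore suffices to verify the inequality at $(4,7)$, $(6,4)$, and $(8,3)$. The values $f(6,4) = \sqrt{2}$ and $f(8,3) = 1/(2\sin(\pi/8)) = 1/\sqrt{2-\sqrt{2}}$ are available in closed form and clearly exceed $1.25$. For $f(4,7) = \sqrt{2}\cos(\pi/7)$, the elementary bound $\cos(x) \ge 1 - x^2/2$ applied with $x = \pi/7$ gives $\cos(\pi/7) \ge 1 - \pi^2/98 > 0.899$, so $f(4,7) > \sqrt{2} \cdot 0.899 > 1.27 > 1.25$.

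There is no substantive obstacle here: the argument is a routine monotonicity reduction followed by explicit estimates at three boundary points. The only mildly delicate step is the bound on $\cos(\pi/7)$, since $\pi/7$ is not a standard angle, but the quadratic Taylor lower bound for $\cos$ handles it comfortably.
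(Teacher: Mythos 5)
Your proof is correct and follows essentially the same route as the paper: part (1) by monotonicity of $\sin$ and $\cos$ on $[0,\pi/2]$, and part (2) by reducing via monotonicity to the three corner values $(4,7)$, $(6,4)$, $(8,3)$. The only difference is cosmetic: you justify the corner evaluations with closed forms and the bound $\cos(x)\ge 1-x^2/2$, whereas the paper simply quotes numerical approximations.
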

		\begin{proof}[Proof of the lemma.]
			\indent
			\begin{enumerate}[label=(\arabic*)]
				\item This immediately follows from the monotonicity of $\cos$ and $\sin$ on $[0, \frac{\pi}{2}]$.
				\item Notice that
				\[
				\begin{aligned}
				f(4, 7) &\approx 1.27416, \\
				f(6, 4) &\approx 1.41421, \\
				f(8, 3) &\approx 1.30656,
				\end{aligned}
				\]
				so we can use (1) to get the inequality for the remaining cases via monotonicity.
			\end{enumerate}
		\end{proof}
		First of all, recall that
		\begin{equation}
		\label{1.25 square root estimate}
		\sqrt{x^2-1} \ge x-\frac{1}{2} \text{ for all } x \ge \frac{5}{4}.
		\end{equation}
		Therefore, we can apply our simple lemma and \eqref{1.25 square root estimate} to get
		\begin{equation}
		(f(n, m) + \sqrt{f(n,m)^2-1})^2 \ge \left(2f(n, m) - \frac{1}{2} \right)^2.
		\end{equation}
		So, instead of checking \eqref{even inequality}, let us check a slightly stronger inequality:
		\begin{equation}
		\left(2f(n, m) - \frac{1}{2} \right)^2 > n, 
		\end{equation}
		which is equivalent to
		\begin{equation}
		\label{stronger even inequality}
		\dfrac{\cos(\frac{\pi}{m})}{\sin(\frac{\pi}{n})} > \dfrac{\sqrt{n} + \frac{1}{2}}{2}.
		\end{equation}
		If we multiply both sides by $\sin\left(\frac{\pi}{n} \right)$ and take $\arccos$, we get
		\[
		f_l(m) := \cos\left(\frac{\pi}{m} \right) > \sin\left(\frac{\pi}{n}\right) \dfrac{\sqrt{n} + \frac{1}{2}}{2}  =f_r(n).
		\]
		Thankfully, $f_l$ is strictly increasing for $m \ge 4$, and $f_r$ strictly decreasing for all $n \ge 4$. In particular,
		\[
		\begin{aligned}
		f_l(8) &\approx 0.923879, & f_r(4) &\approx 0.88388, \\
		f_l(6) &\approx 0.866, & f_r(6) &\approx 0.73737, \\
		f_l(4) &\approx 0.7071, & f_r(8) &\approx 0.63686, \\
		f_l(3) &= 0.5, & f_r(14) &\approx 0.4719.
		\end{aligned}
		\]
		In particular, this shows that there are only finitely many cases for which \eqref{even inequality} doesn't hold. The remaining cases $(n,m) = (6,5)$ and $(n,m) = (12,3)$ can be verified separately.
	\end{proof}

	\subsubsection{Odd case}
	\begin{theorem}
		Let $(X_n)$ denote the simple random walk on $\Gamma_{n,m}$ where $n \ge 5$ is odd and $m \ge 4$ is even. If
		\begin{equation}
			\label{odd inequality}
			 \sin(\pi / m) \cosh \left( \text{arccosh}\left(\frac{\cos(\pi / m)}{\sin(\pi / n)} \right) + \text{arccosh}(\cot(\pi/m) \cot(\pi/n)) \right) > \cosh(\log(n))
		\end{equation}
		then
		\[
		h < lv.
		\]
	\end{theorem}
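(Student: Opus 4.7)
The plan is to apply Lemma \ref{main lemma} to the element $g = r_1 r_{(n+1)/2}$. Since $n \ge 5$ is odd, sides $1$ and $(n+1)/2$ of $\Delta_{n,m}$ share no vertex, so the hyperbolic lines $\ell_1$ and $\ell_{(n+1)/2}$ carrying them are ultraparallel and $g$ is a hyperbolic translation along their common perpendicular, with translation length $L = 2\,d(\ell_1, \ell_{(n+1)/2})$ and word length $|g| = 2$ (each $r_i$ is an involution and $g$ has infinite order).

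The main step is to derive the identity
\[
\cosh(L/2) \;=\; \sin(\pi/m)\,\cosh(h+R),
\]
where $h = \text{arccosh}(\cos(\pi/m)/\sin(\pi/n))$ is the apothem and $R = \text{arccosh}(\cot(\pi/m)\cot(\pi/n))$ is the circumradius of $\Delta_{n,m}$. I would compute it in the upper half-plane model, placing $\ell_1$ along the imaginary axis, $O$ at $(\sinh h,\,1)$ and $M_1 = (0,\cosh h)$. The unique hyperbolic geodesic through $M_1$ and $O$ is then the Euclidean semicircle of radius $\cosh h$ centred at the origin; continuing along it past $O$ by hyperbolic distance $R$ reaches the vertex $V$ of $\Delta_{n,m}$ diametrically opposite to side $1$ (which, for odd $n$, is an endpoint of side $(n+1)/2$), with explicit coordinates
\[
V \;=\; \bigl(\cosh h\,\tanh(h+R),\ \cosh h\,\text{sech}(h+R)\bigr).
\]
Since $V$, $O$, $M_1$ are collinear along a bisector of the interior angle $2\pi/m$ at $V$, the line $\ell_{(n+1)/2}$ meets the segment $VM_1$ at angle $\pi/m$. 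Rotating the tangent of our semicircle at $V$ by $\pi/m$ and then identifying $\ell_{(n+1)/2}$ as the Euclidean semicircle through $V$ with centre on the real axis and this tangent, a direct algebraic simplification yields the ratio of its ideal endpoints $0 < a < b$:
\[
\frac{a}{b} \;=\; \frac{\sin(\pi/m)\,\cosh(h+R) - 1}{\sin(\pi/m)\,\cosh(h+R) + 1}.
\]
The standard formula $\cosh d(\ell_1, \ell_{(n+1)/2}) = (b+a)/(b-a)$ for the distance between the imaginary axis and such a semicircle then collapses to the claim.

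With the identity in hand, \eqref{odd inequality} is exactly $\cosh(L/2) > \cosh(\log n)$, equivalently $L > 2\log n$. For the simple random walk $\mu(r_i) = 1/n$, so $\sum_{i=1}^{|g|}\log(1/\mu(r_i)) = 2\log n$ and the hypothesis \eqref{the thing we need to check} of Lemma \ref{main lemma} is met. Combining its conclusion with $v_d = 1$ from Lemma \ref{logarithmic volume of gammas} shows that condition (4) of Theorem \ref{main tool} fails; hence condition (1) fails, and $h_\mu < l_{d,\mu}\, v_d$.

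The main difficulty is the formula for $L$. In the even case the common perpendicular of two opposite sides passes through $O$ and $L = 4h$ drops out of a single altitude computation, whereas here the axis of $g$ is offset from $O$ and one must carry out the off-centre calculation sketched above (or an equivalent cross-ratio argument on the ideal endpoints of $\ell_{(n+1)/2}$).
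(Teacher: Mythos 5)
Your proposal is correct and follows essentially the same route as the paper: the same element $g = r_1 r_{(n+1)/2}$, the same identity $\cosh(L/2) = \sin(\pi/m)\cosh(h+R)$ for the translation length (the paper obtains it from the Lambert quadrilateral with angle $\pi/m$ formed by side $1$, side $(n+1)/2$, their common perpendicular, and the segment $VM_1$ of length $h+R$, while you rederive it by an explicit upper half-plane computation), and the same conclusion via Lemma \ref{main lemma}, Lemma \ref{logarithmic volume of gammas} and Theorem \ref{main tool}. The only cosmetic gap is that the algebra producing the ratio $a/b$ of the ideal endpoints is sketched rather than carried out, but it does yield the stated identity.
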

	\begin{proof}
		WLOG we can assume that $k = 1$ and we can define 
		$$
		g = r_1 r_{\frac{n+1}{2}}.
		$$ 
		Finding the translation length of $g$ is slightly less trivial in the odd case, because the respective sides of $\Delta_{n,m}$ are not opposite to each other. However, let us consider a hyperbolic line which is orthogonal to the sides corresponding to $r_1$ and $r_{\frac{n+1}{2}}$. Thus, $L_g$ equals to doubled distance between the points where this line intersects $\Delta_{n,m}$, and we can compute it by noticing that it is a side of a Lambert quadrilateral. Therefore,
		\[
		L_g = 2 \text{arccosh} \, \left(\sin \left(\frac{\pi}{m} \right) \cosh(a_{n,m}) \right),
		\]
		where
		\[
		\begin{aligned}
		a_{n,m} &= \text{arccosh}\left(\frac{\cos(\pi / m)}{\sin(\pi / n)} \right) + \text{arccosh}\left(\frac{\cos(\pi / m) + \cos(\pi/m)\cos(2 \pi / n)}{ \sin(\pi/m) \sin(2\pi / n)} \right) = \\ &= \underbrace{\text{arccosh}\left(\frac{\cos(\pi / m)}{\sin(\pi / n)} \right)}_{\text{the length of the altitude}} + \underbrace{\text{arccosh}(\cot(\pi/m) \cot(\pi/n))}_{\text{the distance from center to a vertex}},
		\end{aligned}
		\]
		because
		\[
		\dfrac{1 + \cos(2x)}{\sin(2x)} = \dfrac{2\cos^2(x)}{2 \sin(x) \cos(x)} = \cot(x).
		\]
		Therefore, the inequality $L > - \sum_{i=1}^{|g|} \log(\mu(r_i))$ can be rewritten as
		\[
		L_g = 2 \text{arccosh} \, \left(\sin \left(\frac{\pi}{m} \right) \cosh(a_{n,m}) \right) > 2\log(n),
		\]
		which is equivalent to
		\[
		\sin \left(\frac{\pi}{m} \right) \cosh(a_{n,m}) > \cosh( \log(n) ).
		\]
		We finish the argument by applying Lemma \ref{logarithmic volume of gammas}, Lemma \ref{main lemma} and Theorem \ref{main tool}.
	\end{proof}
	\noindent
	\textbf{Remark.} Keep in mind that the argument works for any nearest-neighbour random walk generated by such $\mu$ that
	\[
		2 \log(n) \ge -\log(\mu(r_k)) - \log( \mu(r_{k + \frac{n \pm 1}{2}}))
	\]
	for some $k \in \mathbb{N}$.
	
	\begin{proposition}
	\label{odd case}
		The inequality \eqref{odd inequality} holds for 
		\begin{itemize}
		    \item $n \ge 5, m \ge 6$,
		    \item $n \ge 7, m \ge 4$.
		\end{itemize}{}
	\end{proposition}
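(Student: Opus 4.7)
The plan parallels the proof of Proposition \ref{even case}: transform \eqref{odd inequality} into a clean algebraic inequality via an addition-formula expansion, then exploit monotonicity in both variables to reduce the verification to two base cases.

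Expanding $\cosh(A+B) = \cosh A \cosh B + \sinh A \sinh B$ with $\cosh A = \cos(\pi/m)/\sin(\pi/n)$ and $\cosh B = \cot(\pi/m)\cot(\pi/n)$, and using the identity
\[
\cos^2(\pi/m)\cos^2(\pi/n) - \sin^2(\pi/m)\sin^2(\pi/n) = \cos(\pi/m + \pi/n)\cos(\pi/m - \pi/n) = \cos^2(\pi/m) - \sin^2(\pi/n)
\]
(positive precisely by the hyperbolicity condition $1/n + 1/m < 1/2$), I would turn the radical $\sin(\pi/m)\sinh A \sinh B$ into a rational expression. Further simplification using $\sin^2(\pi/n) = (1-\cos(\pi/n))(1+\cos(\pi/n))$ should collapse the left-hand side of \eqref{odd inequality} to
\[
\sin(\pi/m)\cosh(A+B) = \frac{\cos^2(\pi/m)}{1 - \cos(\pi/n)} - 1,
\]
so that \eqref{odd inequality} becomes the clean algebraic inequality
\[
2n \cos^2(\pi/m) > (n+1)^2 \bigl(1 - \cos(\pi/n)\bigr).
\]

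In this form monotonicity is transparent: the left side is strictly increasing in both variables, while $(n+1)^2(1 - \cos(\pi/n)) = 2\bigl((n+1)\sin(\pi/(2n))\bigr)^2$ is independent of $m$ and strictly decreasing in $n$ (approaching $\pi^2/2$ from above). Hence the difference of the two sides is strictly increasing in both arguments, and it suffices to verify the inequality at the minimal corner of each claimed region. For $(n,m) = (5,6)$ one computes $7.5 > 36(1 - \cos(\pi/5)) \approx 6.876$, and for $(n,m) = (7,4)$ one computes $7 > 64(1 - \cos(\pi/7)) \approx 6.336$; both hold. The main obstacle is spotting the algebraic collapse above --- without it, the $m$-monotonicity of the left-hand side is obscured by the competing behavior of $\sin(\pi/m)$ (decreasing) and $\cosh(A+B)$ (increasing), but the $\cosh$ addition formula together with the product-to-sum identity resolves this tension and reduces the proof to two arithmetic checks.
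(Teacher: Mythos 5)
Your proof is correct, and it takes a genuinely different --- and in fact sharper --- route than the paper. Your algebraic collapse is right: with $\cosh A=\cos(\pi/m)/\sin(\pi/n)$, $\cosh B=\cot(\pi/m)\cot(\pi/n)$, the identity $\cos^2(\pi/m)\cos^2(\pi/n)-\sin^2(\pi/m)\sin^2(\pi/n)=\cos^2(\pi/m)-\sin^2(\pi/n)$ rationalizes $\sin(\pi/m)\sinh A\sinh B$, giving exactly $\sin(\pi/m)\cosh(A+B)=\frac{\cos^2(\pi/m)}{1-\cos(\pi/n)}-1$, and since $\cosh(\log n)=\frac{n+n^{-1}}{2}$, the inequality \eqref{odd inequality} is \emph{equivalent} to
\begin{equation*}
2n\cos^2\!\left(\frac{\pi}{m}\right)>(n+1)^2\left(1-\cos\!\left(\frac{\pi}{n}\right)\right).
\end{equation*}
The paper proceeds differently: it keeps the radicals in $g(n,m)=\sin(\pi/m)\cosh(a_{n,m})$, bounds them from below twice via $\sqrt{x^2-a^2}\ge x-a$ and uses $\cos(\pi/m)<1$, arriving only at a \emph{sufficient} inequality $g_L(n,m)>g_R(n)$; because those estimates are lossy, the corner points of the claimed region are not captured and the cases $(5,6),(5,8),(7,4)$ have to be ``checked manually'' on top of the Mathematica evaluations. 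Your lossless reduction removes all of that: $2n\cos^2(\pi/m)$ is strictly increasing in both variables, the right-hand side $2\bigl((n+1)\sin(\pi/(2n))\bigr)^2$ is $m$-free and decreasing in $n$, so the two corner checks at $(5,6)$ ($7.5>6.876$) and $(7,4)$ ($7>6.338$) finish the proof; as a bonus, the equivalent form describes the exact region of validity of \eqref{odd inequality} (e.g. it shows at once that $(5,4)$ fails, since $5<6.876$, matching the paper's list of exceptions). The only assertion you leave unproved is the $n$-monotonicity of $(n+1)\sin(\pi/(2n))$; it is true and worth one line --- its derivative in $n$ has the sign of $\tan\theta-\theta-\frac{2}{\pi}\theta^2$ with $\theta=\pi/(2n)\le\pi/10$, which vanishes at $\theta=0$ and has derivative $\tan^2\theta-\frac{4\theta}{\pi}<0$ on $(0,\pi/10]$ --- a level of detail comparable to the monotonicity claims the paper itself states without proof.
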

	
	\noindent
	\textbf{Remark.} The exact region where the inequality \eqref{odd inequality} holds is illustrated by the Figure \ref{fig2}.
	\begin{figure}[h]
		\centering
		\includegraphics[width=10cm]{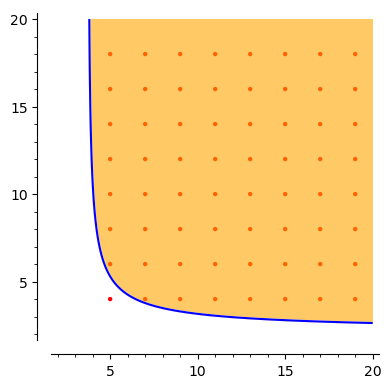}
		\caption{The dots in the orange region correspond to the pairs $(n,m)$ for which \eqref{odd inequality} holds. The only exceptional pair in this case is $(n,m) = (5,4)$.}
		\label{fig2}
	\end{figure}
		
	\begin{proof}
		Equivalently, we want to prove that
		\begin{equation}
		\label{simplified odd inequality}
		\sin \left(\frac{\pi}{m} \right) \cosh(a_{n,m}) + \sqrt{ \sin \left(\frac{\pi}{m} \right)^2 \cosh(a_{n,m})^2 - 1 } \ge n.
		\end{equation}
		\begin{lemma}
		    \indent
			\begin{enumerate}[label=(\arabic*)]
				\item Denote $g(n, m) = \sin \left(\frac{\pi}{m} \right) \cosh(a_{n,m})$. Then $g(n, m)$ is a separately strictly increasing function for $m,n \ge 3$.
				\item $g(n, m) > 1$. 
			\end{enumerate}
		\end{lemma}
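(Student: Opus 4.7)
The plan is to find a much simpler closed form for $g(n,m)$, after which both claims become essentially elementary. Setting $\alpha = \pi/n$ and $\beta = \pi/m$, the hyperbolic addition formula gives
$$\cosh(a_{n,m}) = \cosh(x_1)\cosh(x_2) + \sinh(x_1)\sinh(x_2),$$
where $\cosh(x_1) = \cos\beta/\sin\alpha$ and $\cosh(x_2) = \cot\alpha\cot\beta$. The key algebraic observation is that the two expressions
$$\cos^2\beta - \sin^2\alpha \qquad \text{and} \qquad \cos^2\alpha\cos^2\beta - \sin^2\alpha\sin^2\beta$$
both factor as $\cos(\alpha+\beta)\cos(\alpha-\beta)$: the first via $\cos^2\beta - \sin^2\alpha = \tfrac{1}{2}(\cos 2\alpha + \cos 2\beta)$ and the sum-to-product formula, the second by difference of squares together with the sine and cosine angle-sum identities. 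Since the polygon's existence requires $\alpha + \beta < \pi/2$, both factors are positive, and the square roots appearing in $\sinh(x_1)$ and $\sinh(x_2)$ thus collapse to rational expressions.

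Multiplying $\cosh(a_{n,m})$ through by $\sin\beta$, substituting the common factorization, and then applying the half-angle identities $1+\cos\alpha = 2\cos^2(\alpha/2)$ and $\sin^2\alpha = 4\sin^2(\alpha/2)\cos^2(\alpha/2)$, the numerator simplifies and one arrives at the clean formula
$$g(n,m) = \frac{\cos^2(\pi/m)}{2\sin^2(\pi/(2n))} - 1.$$
From this, part (1) is transparent: $\cos(\pi/m)$ is strictly increasing in $m$ and $\sin(\pi/(2n))$ is strictly decreasing in $n$ throughout the range in question, so the quotient — hence $g(n,m)$ — is strictly increasing separately in each variable.

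For part (2), the inequality $g(n,m) > 1$ rearranges to $\cos(\pi/m) > 2\sin(\pi/(2n))$. In the range relevant to this subsection ($n \ge 5$ odd, $m \ge 4$ even, which is exactly what makes $g = r_1 r_{(n+1)/2}$ hyperbolic), the left side is at least $\cos(\pi/4) = \sqrt{2}/2$ while the right side is at most $2\sin(\pi/10) = (\sqrt{5}-1)/2 < \sqrt{2}/2$. A more conceptual alternative is that $g(n,m) = \cosh(L_g/2)$ for the translation length $L_g$ of the hyperbolic element $r_1 r_{(n+1)/2}$, and $L_g > 0$ holds because the two mirror lines are non-adjacent sides of the compact polygon $\Delta_{n,m}$, hence ultraparallel in $\overline{\mathbb{H}^2}$. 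The only real obstacle in the proof is spotting the common factorization $\cos(\alpha+\beta)\cos(\alpha-\beta)$ of the two distinct-looking expressions above; once it is noticed, the rest of the argument reduces to bookkeeping with half-angle formulas.
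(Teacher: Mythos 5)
Your proof is correct, and it takes a genuinely different route from the paper. The paper proves (1) by expanding $g(n,m)=\sin(\pi/m)\cosh(a_{n,m})$ via the addition formula into a sum of products (its equation \eqref{def}) and arguing that each factor is separately monotone, and proves (2) by the geometric observation that $g(n,m)=\cosh(L_g/2)$ with $L_g>0$. You instead push the computation one step further: using the factorization $\cos^2\beta-\sin^2\alpha=\cos^2\alpha\cos^2\beta-\sin^2\alpha\sin^2\beta=\cos(\alpha+\beta)\cos(\alpha-\beta)$ (valid and positive since $\alpha+\beta<\pi/2$ for hyperbolic pairs), the two radicals collapse and one indeed gets the closed form
\[
g(n,m)=\frac{(1+\cos\alpha)\cos^2\beta-\sin^2\alpha}{\sin^2\alpha}=\frac{\cos^2(\pi/m)}{2\sin^2(\pi/(2n))}-1,
\]
which I have checked both algebraically and numerically. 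This buys several things: part (1) becomes immediate for every pair where $a_{n,m}$ is defined; part (2) becomes the explicit elementary inequality $\cos(\pi/m)>2\sin(\pi/(2n))$, which your bounds $\cos(\pi/4)=\sqrt{2}/2$ versus $2\sin(\pi/10)=(\sqrt5-1)/2$ settle in the range actually used ($n\ge5$, $m\ge4$); and, as a bonus, it shows that the unrestricted statement ``$g(n,m)>1$'' fails for $n=3$ (e.g.\ $g(3,m)<1$ for all $m$), so your explicit restriction to the relevant range is not just cautious but necessary — the paper's own justification via $L_g>0$ likewise only applies when the two sides are non-adjacent. A further incidental advantage: the paper's displayed expansion \eqref{def} as printed actually equals $\sin(\pi/m)\sinh(a_{n,m})$ rather than $\sin(\pi/m)\cosh(a_{n,m})$ (the cross terms were taken instead of $\cosh\cosh+\sinh\sinh$), and your derivation implicitly corrects this. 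One small quibble: the parenthetical claim that ``$m\ge4$ even is exactly what makes $r_1r_{(n+1)/2}$ hyperbolic'' is off — hyperbolicity comes from the two mirror sides being non-adjacent (hence ultraparallel), as you correctly state in your alternative argument; the parity of $m$ is irrelevant to that point and the remark should be dropped.
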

		\begin{proof}
			\begin{enumerate}[label=(\arabic*)]
				\item Let's rewrite $g(n, m)$ using the trigonometric identity
				\[
				\cosh(x+y) = \cosh(x)\cosh(y) + \sinh(x)\sinh(y).
				\]
				We get
				\begin{equation}
				\label{def}
				\begin{aligned}
				g(n,m) = & (\cos(\pi / m)\cot(\pi / n)) \sqrt{\frac{\cos(\pi / m)^2}{\sin(\pi / n)^2} - 1} + \\ &+ \frac{\cos(\pi / m)}{\sin(\pi / n)} \sqrt{\left( \cos(\pi / m)\cot(\pi / n) \right)^2 - \sin(\pi/m)^2}.
				\end{aligned}
				\end{equation}
				
				One easily can check that every term in this expression is monotone in $n$ and $m$.
				
				\item This readily follows from the fact that $L_g$ is a well-defined positive number.
			\end{enumerate}
		\end{proof}
		We can use the estimate
		\begin{equation}
		\label{a classical square root estimate}
		\sqrt{x^2-a^2} \ge x-a \quad (x \ge a > 0)
		\end{equation}
		to obtain
		\begin{equation}
		g(n,m) + \sqrt{g(n,m)^2-1} \ge 2g(n, m) - 1 > n.
		\end{equation}
		This is equivalent to
		\begin{equation}
		\label{slightly stronger odd inequality}
		g(n,m) > \dfrac{n+1}{2},
		\end{equation}
		so now we want to solve this inequality which is a bit stronger than \eqref{simplified odd inequality}. However, we can expand \eqref{def} even further using \eqref{a classical square root estimate}:
		Let us expand $g(n, m)$:
		\begin{equation}
		\begin{aligned}
		g(n,m) >  & \left(\frac{\cos(\pi / m)}{\sin(\pi / n)} - 0.5\right) \cos(\pi / m) \cot(\pi / n) + \\ &+ \frac{\cos(\pi / m)}{\sin(\pi / n)} \left(\cos(\pi / m)\cot(\pi / n)  - \sin(\pi/m) \right).
		\end{aligned}
		\end{equation}
		Multiply both parts by $\dfrac{\sin^2(\pi/n)}{\cos(\pi/n)}$:
		\begin{equation}
		2\cos^2\left(\pi/m\right) - 0.5 \sin( \pi/n) \cos(\pi/m) - 0.5 \sin(2\pi/m) \tan(\pi/n) > \dfrac{\sin^2(\pi/n)}{\cos(\pi/n)} \dfrac{n+1}{2}.
		\end{equation}
		We can use $\cos(\pi/m) < 1$ to get
		\begin{equation}
		g_L(n, m) := 2\cos^2(\pi/m) - 0.5 \sin(\pi/n) - 0.5 \sin(2\pi/m) \tan(\pi/n) > \dfrac{\sin^2(\pi/n)}{\cos(\pi/n)} \dfrac{n+1}{2} =: g_R(n).
		\end{equation}
		This inequality is particularly nice because $g_L$ is a separately strictly increasing function and $g_M$ is stricly decreasing. Therefore, it is enough to check the inequality for several particular values of $n,m$.
		
		Another check in Wolfram Mathematica shows that this inequality holds for
		\[
		\begin{aligned}
		g_L(5,10) &\approx 1.3016 & g_R(5) &\approx 1.28115 \\
		g_L(7,5) &\approx 0.863073 & g_R(7) &\approx 0.83579 \\
		g_L(9,4) &\approx 0.647005 & g_R(9) &\approx 0.622426.
		\end{aligned}
		\]
		The remaining cases $(n,m) = (5,6),(5,8),(7,4)$ can be checked manually.
	\end{proof}

	\subsection{Fuchsian groups}
	\label{Fuchsian groups}
	Various studies on the hyperbolic circle problem (see \cite{PHILLIPS199478}, for example) show that $v = 1$ for cocompact Fuchsian groups. Moreover, it is a well-known fact that any cocompact Fuchsian group $\Gamma$ admits a Dirichlet domain $\Delta_{\Gamma}$, which is an even-sided hyperbolic polygon, where each side corresponds to a generator of $\Gamma$, and the resulting system will be minimal (see \cite{book:291162}). Let us denote this symmetric system by $\Sigma$.
	
	Recall that a random walk $(X_n)$ on a group $\Gamma$ defined be a probability measure $\mu$ is \textbf{symmetric} if $\mu(s) = \mu(s^{-1})$ for all $s \in \Sigma$. A simple modification of Lemma \ref{main lemma} allows us to formulate the following result.
	\begin{theorem}
	    \label{Fuchsian theorem}
		Let $(\Gamma, \Sigma) \subset \text{PSL}(2, \mathbb{R})$ be a cocompact Fuchsian group with the Dirichlet domain $\Delta_{\Gamma}$, and let us consider a generating symmetric nearest-neighbour random walk $(X_n)$ defined by a probability measure $\mu$. Suppose that there exists a hyperbolic element $g \in \Gamma$ and $x_0 \in \mathbb{H}^2$ such that
			\[
			L_g = d_{\mathbb{H}^2}(x_0, g.x_0) > - \sum_{i=1}^{|\Sigma|} \log(\mu(s_i)). 
			\]
		Then, for $d = d_{\mathbb{H}^2}$ we have
		\[
		h_\mu < l_{d, \mu} v_d = l_{d, \mu}.
		\]
	\end{theorem}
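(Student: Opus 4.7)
The plan is to adapt the argument of Lemma \ref{main lemma} almost verbatim to the Fuchsian setting, and then invoke the implication $(4) \Rightarrow (1)$ of Theorem \ref{main tool}. First I would verify the hypotheses of Theorem \ref{main tool}: by the Milnor--\v{S}varc lemma the cocompact Fuchsian group $\Gamma$ is word-hyperbolic; it is non-elementary because it contains the hyperbolic element $g$ and $\Gamma \backslash \mathbb{H}^2$ is a closed surface; $\mu$ is finitely supported, symmetric, and generating by assumption; and $v_d = 1$ by the hyperbolic circle problem for cocompact Fuchsian groups, as recalled at the beginning of Section \ref{Fuchsian groups}.

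Next I would refute condition $(4)$ of Theorem \ref{main tool}. Choosing $x_0$ on the translation axis $\xi$ of $g$ gives $d_{\mathbb{H}^2}(e, g^k) = k L_g$ for every $k \ge 1$; as in Lemma \ref{main lemma}, the triangle inequality then makes this base-point choice inessential at the end. Fixing a word representation $g = s_{i_1} \cdots s_{i_{|g|}}$ in $\Sigma$, the elementary path bound \eqref{stronger} yields $F_\mu(e, g^k) \ge F_\mu(e, g)^k \ge \prod_{j=1}^{|g|} \mu(s_{i_j})^k$, hence
\[
d_\mu(e, g^k) \;\le\; -k \sum_{j=1}^{|g|} \log \mu(s_{i_j}) \;\le\; -k \sum_{i=1}^{|\Sigma|} \log \mu(s_i),
\]
so that
\[
d_{\mathbb{H}^2}(e, g^k) - d_\mu(e, g^k) \;\ge\; k \Bigl( L_g + \sum_{i=1}^{|\Sigma|} \log \mu(s_i) \Bigr) \longrightarrow +\infty
\]
by the hypothesis on $L_g$. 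Hence condition $(4)$ fails, so condition $(1)$ fails, giving $h_\mu < l_{d, \mu} v_d = l_{d, \mu}$.

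The main point of care is the second inequality in the displayed upper bound on $d_\mu(e, g^k)$: since the summands $-\log \mu(s_i)$ are non-negative, it suffices to take the word representation of $g$ in which each element of $\Sigma$ appears at most once. This is automatic in the concrete applications of Section \ref{Fuchsian groups}, where $g$ will be a two-letter product of distinct side-pairing isometries, so the condition is a mild bookkeeping constraint rather than a genuine obstacle. Apart from this, the only substantive new ingredient over Lemma \ref{main lemma} is the identity $v_d = 1$, which now comes from the classical asymptotics for cocompact Fuchsian groups rather than from the direct circle-packing computation of Lemma \ref{logarithmic volume of gammas}; the axis-based identity for $d_{\mathbb{H}^2}(e, g^k)$ and the path-based lower bound for $F_\mu$ are presentation-independent and transfer with no change.
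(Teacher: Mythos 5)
Your proposal is correct and is essentially the paper's own argument: the paper gives no separate proof of Theorem \ref{Fuchsian theorem}, presenting it as a ``simple modification'' of Lemma \ref{main lemma}, which is exactly the combination you use --- $v_d = 1$ for cocompact Fuchsian groups, the path estimate \eqref{stronger} applied to powers of $g$ with the basepoint on its axis, and the failure of condition (4) of Theorem \ref{main tool}. The caveat you flag (bounding the sum over the letters of $g$ by the sum over all of $\Sigma$ requires a representation of $g$ using each generator at most once) is a genuine imprecision, but it sits in the theorem's hypothesis as stated rather than in your argument, and it is harmless for the way the paper actually applies the result, where $g$ is a single side-pairing translation $t_i$ and only $L_g > -\log\mu(t_i)$ is verified.
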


	Now we want to apply this theorem to a concrete family of Fuchsian groups. For an even $n \ge 4$, given a regular hyperbolic polygon $\Delta_{n,m}$, one can define a Fuchsian group $\mathcal{F}_{n,m}$ which is generated by translations $(t_i)_{1 \le i \le n}$ such that the axis of $t_i$ goes through the centers of $\Delta_{n,m}$ and $r_i(\Delta_{n,m})$, where $r_i(\Delta_{n, m})$ is the polygon in the tessellation with shares the $i$-th side with $\Delta_{n, m}$, and $t_i$ takes the center of $\Delta_{n,m}$ to the center of $r_i(\Delta_{n,m})$. It is a cocompact Fuchsian group because every element of $\mathcal{F}_{n,m}$ preserves the hyperbolic tessellation induced by $\Delta_{n,m}$, and the action is transitive on the tiles. Therefore, the fundamental domain will be contained in $\Delta_{n,m}$. In particular, $\mathcal{F}_{n,m}$ is a non-elementary hyperbolic group and Theorems \ref{main tool} and \ref{Fuchsian theorem} apply.
	
	\begin{proof}[Proof of Theorem \ref{my Fuchsian theorem}]
		Suppose that $(n,m)$ satisfy the inequality \eqref{even inequality}, where $n \ge 4$ is even. Consider the regular hyperbolic polygon $\Delta_{n,m}$ and a nearest-neighbour symmetric random walk on $\mathcal{F}_{n,m}$ generated by $\mu$. Since $|\Sigma| = n$, we can always choose such $i$ that $\mu(t_i) \ge \frac{1}{n}$. But it is easily seen that because $L_g = 2h_{n,m}$, the inequality $L_g > -\log(\mu(t_i))$ follows from \eqref{arccos} (or, equivalently, \eqref{even inequality}):
		\[
		L_g = 2h_{n,m} \stackrel{\eqref{arccos}}{>} \log(n) \ge -\log(\mu(t_i)).
		\]
		Therefore, we proved that for \textbf{every} generating symmetric nearest-neighbour random walk on $\mathcal{F}_{n,m}$ we have $h < lv$ with respect to the hyperbolic distance. And by Proposition \ref{even case}, we know that there are only finitely many exceptional cases: $(n,m)=(4,5),(4,6),(4,7),(6,4),(8,3),(10,3)$.
	\end{proof}
	Moreover, we claim that this is somewhat a general occurrence for the simple random walks on cocompact Fuchsian groups generated by hyperbolic elements.
	
	Suppose that the diameter $\Delta_{\Gamma}$ equals $2R$, and $2R > \log(|\Sigma|)$. Then, due to the triangle inequality, for any generator $g \in \Gamma$ we have
	\[
	L_g \ge 2R > \log(|\Sigma|) := \log(2n),
	\]
	and we can apply Theorem \ref{Fuchsian theorem}.

	As one is able to see, we just reduced the question to a non-group-theoretic one: we are comparing two purely geometric values. We claim that this is quite likely to happen, because we can assume that if $\Delta_{\Gamma}$ is ``close'' to a regular polygon, then its area can be approximated by the area of a hyperbolic ball $B(x_0, R)$, which, in turn, is approximately $(2n-2)\pi$. So,
	\[
	4 \pi \sinh^2(R/2) \approx 4 \pi e^R \approx (2n-2)\pi \Rightarrow R \approx \log\left(\frac{n-1}{2} \right),
	\]
	and
	\[
	2 \log\left(\frac{n-1}{2}\right) > \log(2n)
	\]
	for $n \gg 1$. This gives us an idea that for Fuchsian groups with a large number of generators we are more likely to have $h < lv$.

	\printbibliography
	\Addresses
\end{document}